\newtheorem{proposition}{Proposition}
\newtheorem*{proposition*}{Proposition}
\newtheorem{definition}{Definition}
\newtheorem*{definition*}{Definition}
\newtheorem{theorem}{Theorem}
\newtheorem*{theorem*}{Theorem}
\newtheorem{corollary}{Corollary}
\newtheorem{lemma}{Lemma}
\newtheorem{example}{Example}
\newtheorem*{example*}{Example}
\renewcommand{\P}{\mathbb{P}}
\newcommand{\R}{\mathbb{R}}
\newcommand{\N}{\mathbb{N}}
\newcommand{\Z}{\mathbb{Z}}
\newcommand{\E}{\mathbb{E}}
\newcommand{\I}{\mathbb{I}}
\newcommand{\mF}{\mathcal{F}}
\newcommand{\mG}{\mathcal{G}}
\newcommand{\mH}{\mathcal{H}}
\newcommand{\mM}{\mathcal{P}} %^{\rm slope}}
\newcommand{\mN}{\mathcal{N}}
\newcommand{\mS}{\mathcal{S}}
\newcommand{\mV}{\mathcal{V}}
\newcommand{\argmin}{\operatorname*{\arg\min}}
\newcommand{\Argmin}{\operatorname*{Arg\min}}
\newcommand{\1}{{\bf 1}}
\newcommand{\eps}{\varepsilon}
\newcommand{\row}{{\rm row}}
\newcommand{\col}{{\rm col}}
\newcommand{\rk}{{\rm rk}}
\newcommand{\codim}{{\rm codim}}
\newcommand{\conv}{{\rm conv}}
\newcommand{\supp}{{\rm supp}}
\newcommand{\sign}{{\rm sign}}
\newcommand{\patt}{{\rm patt}}
\newcommand{\round}{{\rm round}}
\newcommand{\stab}{{\rm stab}}
\newcommand{\orb}{{\rm orb}}
\newcommand{\relint}{{\rm relint}}
\newcommand{\aff}{{\rm aff}}
\renewcommand{\vert}{{\rm vert}}
\newcommand{\betaOLS}{\hat\beta^{\rm ols}}
\newcommand{\betaRIDGE}{\hat\beta^{\rm ridge}}
\newcommand{\betaLASSO}{\hat\beta^{\rm lasso}}
\newcommand{\betaBP}{\hat\beta^{\rm bp}}
\newcommand{\betaSLOPE}{\hat\beta^{\rm slope}}
\newcommand{\Spen}{S_{X,\|.\|}}
\newcommand{\SpenZ}{S_{Z,\|.\|}}
\newcommand{\Ssup}{S_{X,\|.\|_\infty}}
\newcommand{\Sbp}{S_{X,{\rm bp}}}
\newcommand{\SlassoL}{S_{X,\lambda\|.\|_1}}
\newcommand{\Sslope}{S_{X,\|.\|_w}}
\newcommand{\dnorm}{\partial_{\|.\|}}
\newcommand{\dnormx}[1]{\partial_{\|.\|_{#1}}}
\renewcommand{\emptyset}{\varnothing}
\setlist[enumerate]{label=\arabic*)}
\colorlet{shadecolor}{gray!20}
\begin{document}
%%%%%%%%%%%%%%%%%%%%%%%%%%%%%%%%%%%%%%%%%%%%%%%%%%%%%%%%%%%%%%%%%%%%%%%%%%%%
%%%%%%%%%%%%%%%%%%%%%%%%%%%%%%%%%%%%%%%%%%%%%%%%%%%%%%%%%%%%%%%%%%%%%%%%%%%%
%%%%%%%%%%%%%%%%%%%%%%%%%%%%%%%%%%%%%%%%%%%%%%%%%%%%%%%%%%%%%%%%%%%%%%%%%%%%

\onehalfspacing 

\author[1]{Ulrike Schneider %\footnote{corresponding author: Institute
%of Statistics and Mathematical Methods in Economics, TU Wien, Wiedner
%Hauptstraße 8-10,1040 Vienna, Austria,
%ulrike.schneider@tuwien.ac.at.}
} 
\affil[1]{TU Wien}

\author[2]{Patrick  Tardivel}
\affil[2]{University of Wroc\l{}aw and University of Burgundy}

\title{The Geometry of Uniqueness, Sparsity and Clustering in Penalized Estimation}

\date{}

\maketitle

\begin{abstract}
We provide a necessary and sufficient condition for the uniqueness of
penalized least-squares estimators whose penalty term is given by a
norm with a polytope unit ball, covering a wide range of methods
including SLOPE, PACS, fused, clustered and classical LASSO as well as
the related method of basis pursuit. We consider a strong type of
uniqueness that is relevant for statistical problems. The uniqueness
condition is geometric and involves how the row span of the design
matrix intersects the faces of the dual norm unit ball, which for
SLOPE is given by the signed permutahedron. Further considerations
based this condition also allow to derive results on sparsity and
clustering features. In particular, we define the notion of a SLOPE
pattern to describe both sparsity and clustering properties of this
method and also provide a geometric characterization of accessible
SLOPE patterns.
\end{abstract}

\medskip

\noindent {\bf Keywords:} penalized estimation, SLOPE, uniqueness,
sparsity, clustering, regularization, geometry, polytope.

%\noindent {\bf Running title:} The Geometry of Uniqueness and Sparsity

\smallskip

\noindent {\bf MSC 2020:} Primary 62-08; Secondary 52B12.

%%%%%%%%%%%%%%%%%%%%%%%%%%%%%%%%%%%%%%%%%%%%%%%%%%%%%%%%%%%%%%%%%%%%%%%%%%%%
%%%%%%%%%%%%%%%%%%%%%%%%%%%%%%%%%%%%%%%%%%%%%%%%%%%%%%%%%%%%%%%%%%%%%%%%%%%%
\section{Introduction} \label{sec:intro}
%%%%%%%%%%%%%%%%%%%%%%%%%%%%%%%%%%%%%%%%%%%%%%%%%%%%%%%%%%%%%%%%%%%%%%%%%%%%
%%%%%%%%%%%%%%%%%%%%%%%%%%%%%%%%%%%%%%%%%%%%%%%%%%%%%%%%%%%%%%%%%%%%%%%%%%%%

The linear regression model $Y = X\beta + \eps,$ where $X \in \R^{n
\times p}$ is a fixed matrix, $\beta\in \R^p$ is an unknown parameter
vector, and $\eps$ is a centered random error term in $\R^n$, plays a
central role in statistics. When $\ker(X) = \{0\}$, the ordinary
least-squares estimator $\betaOLS = (X'X)^{-1}X'Y$, which minimizes
the residual sum of squares $\|Y - Xb\|_2^2$ with respect to $b \in
\R^p$, is the usual estimator of $\beta$. In high dimensions, when $p
> n$, and thus $\ker(X) \neq \{0\}$, the ordinary least squares
estimator is no longer well-defined, as then the function $b\in \R^p
\mapsto \|Y-Xb\|_2^2$ does not have a unique minimizer.

In this case, typically, a penalty term is added to the residual sum
of squares to provide an alternative to ordinary least-squares
estimation. In some cases, also the minimizer of the penalized
least-squares optimization problem is not unique. Since $Y$ is a
random vector and the induced stochastic properties on the minimizer
are often the object of study in a statistical framework, it is
relevant to consider a strong type of uniqueness: uniqueness for a
given $X$ that holds for all realizations\footnote{Certain results in
the literature \citep{ZhangEtAl15,Gilbert17,MousaviShen19} provide a
criterion for the uniqueness of a given minimizer. These results
naturally differ strongly from the ones in the present article as they
deal with a weaker notion of uniqueness.} of $Y$ in $\R^n$. In this
paper, we provide a necessary and sufficient condition for uniqueness
for a wide class of penalties based on a geometric criterion, as well
as for the related methods of basis pursuit. Moreover, the geometry
involved in this condition also yields results for model selection and
pattern recovery, i.e., sparsity and related clustering properties,
which we investigate for SLOPE in particular.

%%%%%%%%%%%%%%%%%%%%%%%%%%%%%%%%%%%%%%%%%%%%%%%%%%%%%%%%%%%%%%%%%%%%%%%%%%%%
\subsection{Penalized least-squares estimators and uniqueness} \label{subsec:intro_penest}
%%%%%%%%%%%%%%%%%%%%%%%%%%%%%%%%%%%%%%%%%%%%%%%%%%%%%%%%%%%%%%%%%%%%%%%%%%%%

The Ridge estimator, minimizing the function  $b \in \R^p \mapsto
\frac{1}{2} \left\|Y-Xb\right\|_2^2 + \lambda \|b\|^2_2$, where
$\lambda > 0$ is a tuning parameter, was the first penalized estimator
to appear in the statistics literature
\citep{HoerlKennard70,GolubEtAl79}. Due to the strict convexity of the
function $b \mapsto \|b\|_2^2$, the minimizer is always unique and
given by $\betaRIDGE = (X'X + \lambda \I_p)^{-1}X'Y$. This estimator
is not sparse, meaning that it does not set components equal to zero
almost surely. Especially when $p$ is large, this can make the
estimator more difficult to interpret compared to other methods such
as LASSO or SLOPE, which do exhibit sparsity and are described in the
following.

%\medskip

The Least Absolute Shrinkage and Selection Operator or LASSO
\citep{ChenDonoho94,AllineyRuzinsky94,Tibshirani96} is the
$\ell_1$-penalized least-squares estimator defined as
$$
\betaLASSO = \argmin_{b \in \R^p} \frac{1}{2} \left\|Y-Xb\right\|_2^2
+ \lambda\|b\|_1, \text{ \rm where } \lambda > 0.
$$
When $\ker(X) = \{0\}$, the function $b \in \R^p \mapsto \left\|Y -
Xb\right\|_2^2$ is strictly convex, immediately implying the
uniqueness of the LASSO minimizer. In high dimensions, $\ker(X) \neq
\{0\}$ and the function $b \in \R^p \mapsto \left\|Y - Xb\right\|_2^2$
is not strictly convex, thus uniqueness of $\betaLASSO$ is not
guaranteed. A geometric description of the set of LASSO minimizers,
relevant when non-uniqueness occurs, is given in
\cite{DupuisVaiter19TR}. A sufficient condition for uniqueness of the
estimator for all $Y \in \R^n$ is for the columns of the design matrix
$X$ to be in general position. This was first outlined by
\cite{RossetEtAl04} and later investigated by \cite{Tibshirani13} and
\cite{AliTibshirani19}. Recently, this condition was relaxed by
\cite{EwaldSchneider20} to a geometric criterion that is both
sufficient and necessary and which is generalized for a wide class of
possible penalty terms in the present paper.

%\medskip

A strongly related procedure is basis pursuit, which first appeared in
compressed sensing \citep{ChenDonoho94} and is defined as
$$
\betaBP = \argmin \|b\|_1 \text{ \rm subject to }  Y = Xb,
$$
provided that $Y \in \col(X)$. In the noiseless case, this method
allows to recover a sparse vector $\beta$ \citep[see
e.g.][]{CandesEtAl06,CohenEtAl09}. In the noisy case, when $\eps$ is
no longer zero, the basis pursuit estimator can be viewed as the LASSO
when the tuning parameter $\lambda > 0$ becomes infinitely small
\cite[Lemma 3.6]{Dossal12}\footnote{This reference focuses on
necessary and sufficient conditions to uniquely recover a given $b_0$
from $y=Xb_0$ (in our notation), which is a different type of
uniqueness than we consider.}. Naturally, basis pursuit shares  a lot
of properties with the LASSO estimator. For example, general position
of the columns of the design matrix $X$ is also a sufficient condition
for uniqueness of $\betaBP$ for all $Y \in \R^n$ \citep[see
e.g.][]{Dossal12}. However, to the best of our knowledge, a necessary
and sufficient condition for this type of uniqueness has previously
been unknown.

%\medskip 

Our results also cover Sorted L-One Penalized Estimation or SLOPE
\citep{BogdanEtAl15,NegrinhoMartins14,ZengFigueiredo14}, which is the penalized
estimator given by
$$
\betaSLOPE = \argmin_{b \in \R^p} \frac{1}{2} \left\|Y - Xb\right\|_2^2 
+ \sum_{j=1}^p w_j|b|_{(j)},
$$ 
where $w_1 > 0$, $w_1 \geq \dots \geq w_p \geq 0$, and $|b|_{(1)} \geq
\dots \geq |b|_{(p)}$. Note that the penalty term gives rise to the
so-called sorted-$\ell_1$-norm. A special case of this estimator, the Octagonal
Shrinkage and Clustering Algorithm for Regression or OSCAR, has
already been introduced in \cite{BondellReich08}. The SLOPE estimator
is well-defined once the corresponding minimizer is unique and,
similarly to the LASSO, uniqueness is obvious when $\ker(X) = \{0\}$.
However, in contrast to the LASSO, no condition guaranteeing
uniqueness has previously been established.

%%%%%%%%%%%%%%%%%%%%%%%%%%%%%%%%%%%%%%%%%%%%%%%%%%%%%%%%%%%%%%%%%%%%%%%%%%%%
\subsection{Uniqueness and polytope unit balls}
\label{subsec:intro_unique}
%%%%%%%%%%%%%%%%%%%%%%%%%%%%%%%%%%%%%%%%%%%%%%%%%%%%%%%%%%%%%%%%%%%%%%%%%%%%

In this paper, we study the problem of uniqueness of penalized
estimators in a general setting, where the penalty term is not
restricted the $\ell_1$- or the sorted-$\ell_1$-norm. We describe the framework we
consider in the following. Let $X \in \R^{n\times p}$, $y \in \R^n$,
and $\|.\|$ be a norm on $\R^p$. Consider the solution set
$S_{X,\|.\|}(y)$ to the penalized least-squares problem
$$
\Spen(y) = \Argmin_{b \in \R^p} \frac{1}{2} \left\|y - Xb\right\|_2^2 + \|b\|.
$$
Note that $\Spen(y)$ is non-empty since the function $b \in \R^p
\mapsto \frac{1}{2} \left\|y-Xb\right\|_2^2 + \|b\|$ is continuous and
unbounded when $\|b\|$ becomes large. The penalty term may include a
positive tuning parameter which can be viewed as part of the norm, for
instance $\|.\| = \lambda \|.\|_1$ for the LASSO estimator. When
$\|.\|$ is a norm for which $\|b + \tilde b\| = \|b\|+\|\tilde b\|$
holds if and only if $b = t \tilde b$ where $t \geq
0$\footnote{Typically, $b$ and $\tilde b$ are not orthogonal, thus the
equality in the triangular inequality does not coincide with the
decomposability property described in \cite{NegahbanEtAl12}.}, such as
the $\ell_2$-norm, then $\Spen(y)$ is a singleton for all $y \in \R^n$
and for all $X \in \R^{n \times p}$. This statement is a
straightforward consequence of the following facts. When $\hat\beta,
\tilde\beta \in \Spen(y)$ we have

\begin{itemize}

\item[i)] $X\hat\beta = X\tilde\beta$ (see
Lemma~\ref{lem:fitted-values} in the appendix).

\item[ii)] Since $(\hat\beta + \tilde\beta)/2 \in \Spen(y)$ also,
$\|(\hat\beta + \tilde\beta)/2\| = \|\hat\beta\| = \|\tilde\beta\| =
(\|\hat\beta\| + \|\tilde\beta\|)/2$ follows.

\end{itemize}

Geometrically, such a norm $\|.\|$ possesses a unit ball $\{x \in \R^p
: \|x\| = 1\}$ with no edges. Subsequently, the problem of uniqueness
is only relevant when the unit ball of the norm under consideration
contains an edge. More concretely, we restrict our attention to norms
for which the unit ball $B = \{x \in \R^p : \|x\| \leq 1\}$ is given
by a polytope. Note that this is the case for the $\ell_1$-norm, the
$\ell_\infty$-norm, and the sorted-$\ell_1$-norm. Our results also
cover the fused LASSO \citep{TibshiraniEtAl05}, the Pairwise Absolute
Clustering and Sparsity (PACS) procedure \citep{SharmaEtAl13}, the
clustered LASSO \citep{She10}, or methods with a mixed
$\ell_1$,$\ell_\infty$-norm penalty term
\citep{NegahbanWainwright08,BachEtAl12}.

%%%%%%%%%%%%%%%%%%%%%%%%%%%%%%%%%%%%%%%%%%%%%%%%%%%%%%%%%%%%%%%%%%%%%%%%%%%%
\subsection{Sparsity and clustering: accessible patterns and sign
estimation} \label{subsec:intro_accpatterns}
%%%%%%%%%%%%%%%%%%%%%%%%%%%%%%%%%%%%%%%%%%%%%%%%%%%%%%%%%%%%%%%%%%%%%%%%%%%%

As mentioned above, the LASSO estimator is a sparse method that
generally sets components equal to zero with positive probability,
entailing that the estimator also performs so-called model or variable
selection. In fact, when $p > n$ and the solution is unique,
$\betaLASSO$ contains at least $p - n$ zero components. Instigated by
this property, an abundant literature has arisen to deal with the
recovery of the location of the non-null components of $\beta$, or,
more specifically, the recovery of the sign vector of $\beta$
\citep{Zou06,ZhaoYu06,Wainwright09}.

A necessary condition for the recovery of $\sign(\beta)$ is for this
vector to be accessible by the LASSO, i.e., for a fixed $\lambda > 0$,
there has to exist $Y \in \R^n$ for which $\sign(\betaLASSO) =
\sign(\beta)$. Otherwise, $\P(\sign(\betaLASSO) = \sign(\beta)) = 0$,
and recovery is clearly impossible. A geometrical characterization of
accessible sign vectors is given in \cite{SepehriHarris17} under the
assumption of uniqueness of LASSO solutions.
%{\bf Also basis pursuit is also sometimes used for sign recovery of
%$\beta$ \citep[see e.g.][]
%{SaligramaZhao11,TardivelBogdan22,DesclouxSardy18TR,DesclouxEtAl20TR}, however, 
%the notion of accessible sign vectors has not been extended to this method before. 
%In this article, we provide a geometric criterion for accessibility for both LASSO 
%and basis pursuit from a different viewpoint and without the assumption of uniqueness.
%The geometry of our characterization is closely related to the geometrical considerations 
%for the uniqueness of these estimators.}
In the appendix, we provide a geometrical characterization of
accessible sign vectors for both basis pursuit and LASSO without a
uniqueness assumption.

%{\bf Finally, the SLOPE estimator is also a sparse method which
%additionally exhibits a clustering phenomenon, as some components may
%be equal in absolute value with positive probability. This property
%can be deduced from the explicit expressions one obtains in case the
%columns of $X$ are orthogonal \citep{TardivelEtAl20, DupuisTardivel22} and also holds in
%the general case. In certain applications, this clustering feature
%(which is not shared by the LASSO) may be of particular relevance
%\citep{KremerEtAl19TR,KremerEtAl20}. We show how our geometric
%approach can be used to provide a characterization of the the clusters
%induced by SLOPE.}

In the OSCAR procedure mentioned in Section~\ref{subsec:intro_penest},
the letter ``C'' stands for ``Clustering'', referring to the fact that
some components of this estimator can be equal in absolute value. This
property can be illustrated for OSCAR -- as well as the more general
SLOPE method -- by drawing the elliptic contour lines of the residual
sum of squares $b \mapsto \|Y - Xb\|_2^2$ (when $\ker(X)=\{0\}$)
together with balls of the sorted-$\ell_1$-norm\footnote{See, e.g.,
Figure~2 in \cite{BondellReich08} or Figure~1 in
\cite{ZengFigueiredo14}.}. This clustering property can also be
deduced from the explicit expressions of SLOPE one obtains for the
case where the columns of $X$ are orthogonal \citep{TardivelEtAl20,
DupuisTardivel22}. We show that the clustering phenomenon also holds
in the general case by using our geometric approach. This feature of
SLOPE -- which is not shared by the LASSO -- has, of course, been
known in practice and may be of particular relevance in certain
applications \citep{FigueiredoNowak16,KremerEtAl20,KremerEtAl22}.

With a similar objective as the articles written a decade ago on
support or sign recovery by LASSO, there are now several papers
dealing with pattern recovery by SLOPE where the notion of SLOPE
pattern is a central concept \citep{SkalskiEtAl22TR,BogdanEtAl22TR}.
In the present article, we show how our geometric approach can be used
to provide a characterization of the clusters induced by SLOPE.

%%%%%%%%%%%%%%%%%%%%%%%%%%%%%%%%%%%%%%%%%%%%%%%%%%%%%%%%%%%%%%%%%%%%%%%%%%%%
\subsection{Related geometrical works}
%%%%%%%%%%%%%%%%%%%%%%%%%%%%%%%%%%%%%%%%%%%%%%%%%%%%%%%%%%%%%%%%%%%%%%%%%%%%

Most articles providing geometric properties in the context of
penalized estimation treat the LASSO. \cite{TibshiraniTaylor12} show
that the LASSO residual $Y - X\betaLASSO$ is the projection of $Y$
onto the so-called LASSO null polyhedron $\{z \in \R^n: \|X'z\|_\infty
\leq \lambda\}$. From this result, the authors derive an explicit
formula for the Stein's unbiased risk estimate that provides an
unbiased estimator for $\E(\|X\betaLASSO - X\beta\|_2^2)$. This
geometric result also lays the groundwork for selective inference
\citep{LeeEtAl16}, for deriving screening procedures
\citep{ElGhaouiEtAl12,WangEtAl13}, and to describe the accessible
LASSO patterns in \cite{SepehriHarris17}.
For basis pursuit, geometrical considerations focus on dealing with
the $\ell_1$-recovery in the noiseless case and are aimed at deriving the
phase-transition curve \citep{DonohoTanner09b}.
The  recent article of \cite{Minami20} generalizes some results of
\cite{TibshiraniTaylor12} to SLOPE and shows that the number of non-null clusters (the quantity $\|\patt(\betaSLOPE)\|_\infty$ in our
article) appears in the Stein's unbiased risk estimate for SLOPE
estimator. For the sake of completeness we mention that in the present paper, we
provide a convex null set in Proposition~\ref{prop:convexnullset} that
generalizes the concept of the LASSO null polyhedron to all
norm-penalized least-squares estimators, where the projection of $Y$
onto this set yields the estimation residuals.

%%%%%%%%%%%%%%%%%%%%%%%%%%%%%%%%%%%%%%%%%%%%%%%%%%%%%%%%%%%%%%%%%%%%%%%%%%%%
\subsection{Notation and structure} \label{subsec:intro_notation}
%%%%%%%%%%%%%%%%%%%%%%%%%%%%%%%%%%%%%%%%%%%%%%%%%%%%%%%%%%%%%%%%%%%%%%%%%%%%

To conclude this section, we introduce the notation used throughout
this article. We denote the set $\{1,\dots,k\}$ by $[k]$ and use $|I|$ for the
cardinality of a set $I$. The set $\mS_p$ contains all permutations on
the set $[p]$. For a matrix $A$, the symbols $\col(A)$ and $\row(A)$
stand for the column and row space of $A$, respectively, whereas
$\conv(A)$ represents the convex hull of the columns of $A$. As used
in previous sections already, for a number $t$, $\sign(t)$ is given by
$1,-1$, or $0$ if $t > 0, t < 0$, or $t = 0$, respectively. For a
vector $x$, $\sign(x)$ is the vector containing the signs of the
components of $x$. Finally, the symbols $\|.\|_1$, $\|.\|_2$,
$\|.\|_\infty$, and $\|.\|_w$ represent the $\ell_1$-, $\ell_2$-,
supremum, and the sorted-$\ell_1$-norm, respectively.

The remainder of this article is organized as follows.
Section~\ref{sec:unique} contains the main theorem of uniqueness for
penalized least-squares estimators, as well as the analogous necessary
and sufficient uniqueness condition for basis pursuit. In
Section~\ref{sec:sparse}, we investigate the pattern selection
properties related to the geometric condition introduced in
Section~\ref{sec:unique} for SLOPE, including a characterization of
the SLOPE's clustering property. This section also contains a general
result on the convex null set for norm-penalized least-squares
estimation.  Appendix~\ref{app:LASSO_BP} illustrates what our results
entail for LASSO and BP. All proofs are relegated to
Appendix~\ref{app:proofs}, which also contains a remainder of basic
facts of subdifferentials and polytopes.

%%%%%%%%%%%%%%%%%%%%%%%%%%%%%%%%%%%%%%%%%%%%%%%%%%%%%%%%%%%%%%%%%%%%%%%%%%%%
%%%%%%%%%%%%%%%%%%%%%%%%%%%%%%%%%%%%%%%%%%%%%%%%%%%%%%%%%%%%%%%%%%%%%%%%%%%%
\section{A necessary and sufficient condition for uniqueness of
penalized problems} \label{sec:unique}
%%%%%%%%%%%%%%%%%%%%%%%%%%%%%%%%%%%%%%%%%%%%%%%%%%%%%%%%%%%%%%%%%%%%%%%%%%%%
%%%%%%%%%%%%%%%%%%%%%%%%%%%%%%%%%%%%%%%%%%%%%%%%%%%%%%%%%%%%%%%%%%%%%%%%%%%%

We start by providing the framework for the theorem on uniqueness of
penalized least-squares minimization problems. For a norm $\|.\|$ on
$\R^p$, the dual norm $\|.\|^*$ is defined by
$$
\|x\|^* = \sup_{s \in \R^p: \|s\| \leq 1} s'x.
$$
If the unit ball $B = \{x \in \R^p: \|x\| \leq 1\}$ is of polytope
shape, the dual of $B$ given by $B^* = \{x \in \R^p: \|x\|^* \leq
1\}$, the unit ball of the dual norm, is, again, a polytope. In this
case, the penalty term is not differentiable everywhere and there is a
strong connection between the subdifferentials $\dnorm(.)$ of the norm
$\|.\|$ and the faces of the polytope $B^*$. The precise association
is detailed in Appendices~\ref{app:subdiffs}-\ref{app:subdiff-norm}
and this connection provides the basis for the main theorem.

\begin{theorem}[Necessary and sufficient condition for uniqueness] \label{thm:uni-pen} 
Let $X \in \R^{n \times p}$ and let $\|.\|$ be a norm on $\R^p$ whose
unit ball $B$ is given by a polytope. Consider the penalized
optimization problem
\begin{equation} \label{eq:pen}
\Spen(y) = \Argmin_{b \in \R^p} \frac{1}{2} \|y - Xb\|^2 + \|b\|,
\end{equation}
where $y \in \R^n$. Let $B^*$ denote the unit ball of the dual norm
$\|.\|^*$. There exists $y \in \R^n$ with $|\Spen(y)| > 1$ if and only
if $\row(X)$ intersects a face of the dual unit ball $B^*$ whose
codimension is larger than ${\rm rk}(X)$.
\end{theorem}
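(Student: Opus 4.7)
The plan is to reformulate non-uniqueness geometrically via convex duality, prove the easy direction by a dimension count, and handle the necessary direction by an iterative face-reduction argument.

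First I would combine the first-order optimality condition $\hat\beta\in\Spen(y)\iff X'(y-X\hat\beta)\in\partial\|\hat\beta\|$ with Lemma~\ref{lem:fitted-values}, so that all minimizers share the same fitted value, the same residual, and therefore the same dual variable $s:=X'(y-X\hat\beta)\in\row(X)$. Since the convex conjugate of a norm is the indicator of the dual ball $B^*$, Fenchel duality gives $s\in\partial\|\hat\beta\|$ iff $\hat\beta\in N_{B^*}(s)$, the normal cone of $B^*$ at $s$. Non-uniqueness thus becomes: there exist two distinct elements of $N_{B^*}(s)$ whose difference lies in $\ker(X)$, and because $N_{B^*}(s)$ is a convex cone through $0$ its set of differences equals its linear span. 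This yields the clean reformulation: some $y$ has $|\Spen(y)|>1$ iff there exists $s\in\row(X)$ with $\mathrm{span}(N_{B^*}(s))\cap\ker(X)\neq\{0\}$. A standard polytope computation, of the kind collected in the paper's appendix, identifies $\mathrm{span}(N_{B^*}(s))$ with $(\aff(F)-s)^\perp$ of dimension $\codim(F)$, where $F$ is the unique face with $s\in\relint(F)$.

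For the sufficient direction I would take $F$ with $\codim(F)>\rk(X)$ and $s\in F\cap\row(X)$, pass to the minimal face containing $s$ (which only increases $\codim$), and observe $\dim\mathrm{span}(N_{B^*}(s))+\dim\ker(X)=\codim(F)+(p-\rk(X))>p$, yielding a nonzero $v\in\mathrm{span}(N_{B^*}(s))\cap\ker(X)$. Writing $v=\hat\beta_1-\hat\beta_2$ with $\hat\beta_i\in N_{B^*}(s)$ (possible since $C-C=\mathrm{span}(C)$ for any convex cone $C\ni0$), picking any $r$ with $X'r=s$, and setting $y=X\hat\beta_1+r$, one checks via the subgradient criterion (using $Xv=0$ so that the residual at $\hat\beta_2$ is also $r$) that $\hat\beta_1,\hat\beta_2\in\Spen(y)$.

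For the necessary direction, the reformulation supplies $s\in\row(X)$ and a minimal face $F_0\ni s$ with $(\aff(F_0)-s)^\perp\cap\ker(X)\neq\{0\}$. If $\codim(F_0)>\rk(X)$ we are done; otherwise, taking orthogonal complements rewrites the condition as $(\aff(F_0)-s)+\row(X)\subsetneq\R^p$, and combined with $\codim(F_0)\leq\rk(X)$ this forces $(\aff(F_0)-s)\cap\row(X)$ to have positive dimension. I would then move $s$ along a nonzero direction in this intersection until it exits the bounded face $F_0$, landing at a point $s_1\in\row(X)$ in the relative interior of a proper sub-face $F_1\subsetneq F_0$. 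Because $s_1\in\aff(F_0)$ one has $\aff(F_1)-s_1\subseteq\aff(F_0)-s$ as linear subspaces, so the condition is inherited at $F_1$, and $\codim$ strictly increases at each step; iterating finitely many times produces a face with $\codim>\rk(X)$. The main obstacle is precisely this direction, since the dimension count used for sufficiency is not reversible: a given minimizer pair may only witness a face with $\codim(F_0)=\rk(X)$ (as happens, for instance, when $\row(X)$ contains an entire edge of a cube-shaped $B^*$), so the face-reduction iteration is the essential mechanism for extracting the extra codimension guaranteed by the alignment hypothesis.
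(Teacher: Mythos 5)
Your argument is correct, and at its core it runs on the same two mechanisms as the paper's proof: for sufficiency, a dimension count producing a nonzero kernel direction along which a minimizer can be perturbed without changing its norm; for necessity, a line contained in $\row(X)$ and in the affine hull of a face, passing through that face's relative interior and therefore forced to meet a proper subface (Proposition~\ref{prop:polytope-facts}). What differs is the packaging. The paper works with the $\mV$-representation of $B$: a face of $B^*$ is written as $\{s\in B^*: V_I's = \1_{|I|}\}$, the perturbation space is $\col(V_I)$, and Lemma~\ref{lem:subdiff-face-tech} is needed to place both minimizers inside $\col(V_{I_0})$ before the rank argument can run. You instead isolate an explicit intermediate characterization --- non-uniqueness for some $y$ iff some $s\in\row(X)\cap B^*$ has $\mathrm{span}\bigl(N_{B^*}(s)\bigr)\cap\ker(X)\neq\{0\}$ --- and use the identity $\mathrm{span}\bigl(N_{B^*}(s)\bigr)=(\aff(F)-s)^\perp$ of dimension $\codim(F)$ in place of $\col(V_I)$ and its rank; membership of the minimizers in this span is then automatic from $\hat\beta\in N_{B^*}(s)$, so the analogue of Lemma~\ref{lem:subdiff-face-tech} comes for free. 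Where the paper selects a minimal-dimension face of $\dnorm(\hat\beta)\cap\dnorm(\tilde\beta)$ meeting $\row(X)$ and derives one contradiction, you run an explicit descent; the two are logically equivalent (well-ordering versus iteration), but your invariant $(\aff(F)-s)^\perp\cap\ker(X)\neq\{0\}$, being inherited by subfaces, makes the descent transparent. The trade-off: your route yields a cleaner reusable criterion, while the paper's vertex-level bookkeeping is what gets recycled for the explicit constructions in the BP and SLOPE sections. Two small points to record in a full write-up: the facts $C-C=\mathrm{span}(C)$ for a convex cone $C\ni 0$ and $\dim N_{B^*}(F)=\codim(F)$ with span $(\aff(F)-s)^\perp$ (true, but not stated in the paper's appendix in this form), and the constraint $s\in B^*$ in your reformulation, which holds automatically in both directions but is needed for $N_{B^*}(s)$ to be nonempty.
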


It should be noted that also vertices are faces (of dimension zero and
codimension $p$), as is the polytope itself (of dimension $p$ and
codimension zero), a precise definition for faces is given in the
appendix.

As mentioned in the introduction, the notion of uniqueness considered
in Theorem~\ref{thm:uni-pen} is strong in the sense that it guarantees
uniqueness for a given design matrix $X$ \emph{for all} values $y \in
\R^n$. This concept of uniqueness is beneficial when studying the
stochastic properties of the minimizer in a statistical framework, as
then $y$ varies and a criterion independent of $y$ is desirable. Also
note that we make no assumptions on $X$.

If the norm $\|.\|$ involves a tuning parameter $\lambda$, the
uniqueness of the corresponding penalized problem does not depend on
the particular choice of $\lambda$. The parameter simply scales $B$
and subsequently $B^*$ and does not affect which faces are intersected
by the vector space $\row(X)$.

Theorem~\ref{thm:uni-pen} generalizes Theorem~14 given in
\cite{EwaldSchneider20}, which provides a necessary and sufficient
condition for the uniqueness of the LASSO minimizer: All LASSO
solutions are unique if and only if $\row(X)$ only intersects faces of
the unit cube $[-1,1]^p$ whose codimension is less than or equal to
$\rk(X)$. Note that the unit cube is, indeed, the corresponding dual
to the unit ball of the $\ell_1$-norm. 

\begin{example}
We illustrate the criterion from Theorem~\ref{thm:uni-pen} for $\|.\|
= \|.\|_\infty$, the supremum norm, in Figure~\ref{fig:sup_norm}. Let
$X = (1 \;\; 0)$. The unit dual ball $B^*$ is given by the unit
cross-polytope $\conv\{\pm(1,0)',\pm(0,1)'\}$ and we have $\rk(X) =
1$. Clearly, the vertex $(1,0)'$ with codimension $p - 0 = 2 > 1 =
\rk(X)$ intersects $\row(X)$, so that one can pick  $y \in \R$ for
which the set of minimizers $\Ssup(y)$ is not a singleton. In
Figure~\ref{fig:sup_norm}(\subref{subfig:sup_norm-non_unique}), we
illustrate this fact for $\Ssup(2)$.

Also consider $X = (1 \;\; 1)$. Because $\row(X)$ does not intersect
any vertex of $\conv\{\pm(1,0)',\pm(0,1)'\}$, the solution set
$\Ssup(y)$ is always a singleton. In
Figure~\ref{fig:sup_norm}(\subref{subfig:sup_norm-unique}), we
illustrate this fact for $\Ssup(2)$.

\begin{figure}[htbp]

\centering

\begin{subfigure}{\textwidth}
\includegraphics[width=\textwidth]{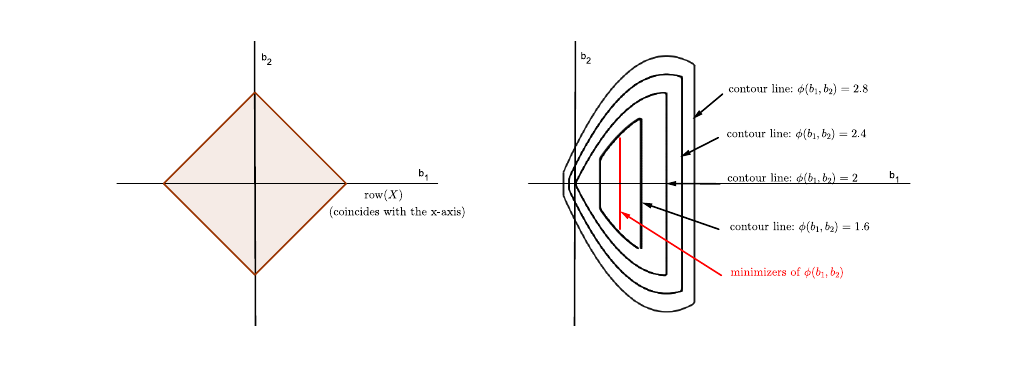}
\subcaption{\label{subfig:sup_norm-non_unique} Let $X = (1 \;\; 0)$.
On the left-hand side, we see that $\row(X)$ intersects a vertex of
the cross-polytope whose codimension is 2 and thus is larger than
$\rk(X) = 1$. Therefore, there exists $y \in \R$ for which $\Ssup(y)$
is not a singleton. On the right-hand side, the contour lines of the
objective function $\phi(b_1,b_2) = 0.5(2-b_1)^2+ \max\{|b_1|,|b_2|\}$
show that the set $\Ssup(2)$ (in red), indeed, contains infinitely many
points.}
\end{subfigure}

\begin{subfigure}{\textwidth}
\includegraphics[width=\textwidth]{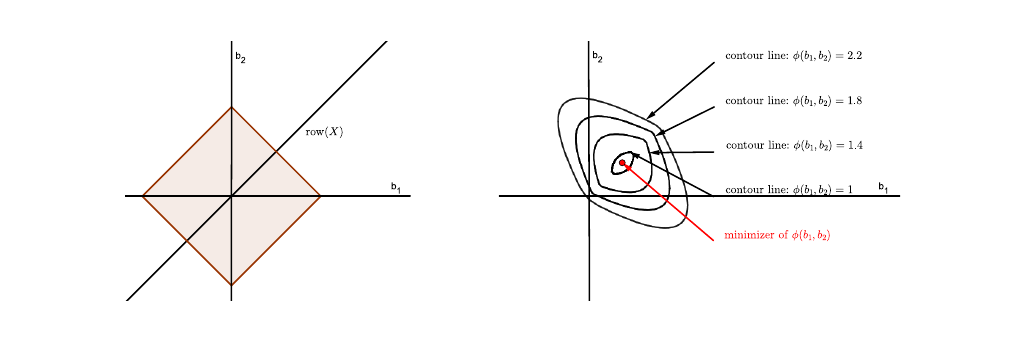} 
\subcaption{\label{subfig:sup_norm-unique}Let $X = (1 \;\; 1)$. On the
left-hand side, we see that $\row(X)$ does not intersect any face of
the cross-polytope whose codimension is larger than $\rk(X) = 1$ (such
faces are the vertices in this example). Therefore, the set $\Ssup(y)$
is a singleton for all $y \in \R$. On the right-hand side, the contour
lines of the objective function $\phi(b_1,b_2) = 0.5(2-b_1)^2 +
\max\{|b_1|,|b_2|\}$ show that the set $\Ssup(2)$ (in red) does,
indeed, only contain a single point.}
\end{subfigure}

\caption{\label{fig:sup_norm} Illustration of
Theorem~\ref{thm:uni-pen} for the supremum norm.}

\end{figure}

\end{example}

%%%%%%%%%%%%%%%%%%%%%%%%%%%%%%%%%%%%%%%%%%%%%%%%%%%%%%%%%%%%%%%%%%%%%%%%%%%%
\subsection{The related problem of basis pursuit} \label{subsec:unique_BP}
%%%%%%%%%%%%%%%%%%%%%%%%%%%%%%%%%%%%%%%%%%%%%%%%%%%%%%%%%%%%%%%%%%%%%%%%%%%%

As mentioned before, the methods of LASSO and basis pursuit (BP) are
closely related, as the BP problem can be thought of a LASSO problem
with vanishing tuning parameter. More concretely, the setting for BP
is the following. Let $X \in \R^{n \times p}$ and let $y \in \col(X)$.
The set $\Sbp(y)$ of BP minimizers is defined as
$$
\Sbp(y) = \Argmin \|b\|_1 \text{ subject to } Xb = y.
$$
The following theorem shows that, indeed, as BP is a limiting case of
the LASSO, the corresponding uniqueness condition -- which is
independent of the choice of tuning parameter as discussed above --
carries over to the BP problem.

\begin{theorem} \label{thm:uni-BP} 
Let $X \in \R^{n \times p}$. There exists $y \in \col(X)$ for which
$|\Sbp(y)| > 1$ if and only if $\row(X)$ intersects a face of the unit
cube $[-1,1]^p$ whose codimension is larger than $\rk(X)$.
\end{theorem}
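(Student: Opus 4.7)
The plan is to derive both directions from the KKT characterization of basis pursuit: $\hat\beta \in \Sbp(y)$ if and only if $X\hat\beta = y$ and there exists $z \in \R^n$ with $X'z \in \partial\|\hat\beta\|_1$, i.e., $\row(X) \cap \partial\|\hat\beta\|_1 \neq \emptyset$. Here $\partial\|\hat\beta\|_1$ is precisely the face of $[-1,1]^p$ whose coordinates are pinned at $\sign(\hat\beta_i)$ on $\supp(\hat\beta)$ and free in $[-1,1]$ elsewhere, which turns the question into one about how $\row(X)$ meets faces of the cube.

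For sufficiency, pick $v \in \row(X)$ lying in a face $F$ with $\codim F > \rk(X)$ and set $I = \{i : |v_i| = 1\}$, so $|I| = \codim F > \rk(X) \geq \rk(X_I)$. The columns of $X$ indexed by $I$ are therefore linearly dependent, yielding a nonzero $u$ with $\supp(u) \subseteq I$ and $Xu = 0$. Let $\hat\beta$ be supported on $I$ with $\hat\beta_i = N v_i$ for large $N$, and let $\tilde\beta = \hat\beta + \epsilon u$ with $\epsilon > 0$ small enough to preserve signs on $I$. Both vectors vanish off $I$ and have the correct signs on $I$, so $v \in \partial\|\hat\beta\|_1 \cap \partial\|\tilde\beta\|_1$; writing $v = X'z$ provides the KKT certificate for both, so they are distinct minimizers for $y := X\hat\beta = X\tilde\beta$.

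For necessity, take $\beta^* \in \relint \Sbp(y)$ and set $S = \supp(\beta^*)$. A triangle-inequality argument applied to the midpoint of any two BP solutions shows that every $\gamma \in \Sbp(y)$ has $\supp(\gamma) \subseteq S$ with signs matching $\beta^*$; hence the difference $\hat\beta - \tilde\beta$ of two distinct solutions is a nonzero vector in $\ker(X)$ supported in $S$, forcing $\rk(X_S) < |S|$. The polyhedron $P = \row(X) \cap F_0$ with $F_0 = \partial\|\beta^*\|_1$ is nonempty (by KKT applied to $\beta^*$) and bounded, hence has an extreme point $v$, which lies in the face of $[-1,1]^p$ of codimension $|T|$ for $T := \{i : |v_i| = 1\} \supseteq S$.

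The main step is then to show $|T| > \rk(X)$. Extremality of $v$ in $P$ forces the tangent space $\row(X) \cap \{d : d_i = 0, i \in T\}$ to be trivial, equivalently $\rk(X_T) = \rk(X)$. On the other hand, $\rk(X_T) \leq \rk(X_S) + |T \setminus S| \leq (|S|-1) + (|T| - |S|) = |T| - 1$, whence $|T| > \rk(X)$, completing the proof. The main obstacle here is that simply invoking KKT at $\beta^*$ witnesses only a face of codimension $|S|$, which can equal $\rk(X)$; the refinement of passing to an extreme point of $P$, together with the strict inequality $\rk(X_S) < |S|$ supplied by the kernel direction between two BP solutions, is precisely what delivers the strict gap $|T| > \rk(X)$.
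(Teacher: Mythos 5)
Your proof is correct. The sufficiency direction coincides with the paper's: both exhibit a solution supported on the pinned coordinates of the intersected face, certify it with the same KKT witness $X'z \in F$, and perturb it by a nonzero kernel vector of $X$ supported on that pinned set (a small imprecision: for $v$ in a face $F$ one only has $|I| \geq \codim(F)$ rather than equality, but the inequality $|I| > \rk(X)$ is all you use). For necessity you share the paper's skeleton --- sign-consistency of all minimizers, a maximal-support solution $\beta^*$ yielding $\rk(X_S) < |S|$, and a passage to a ``deepest'' point of $\row(X) \cap F_0$ --- but you finish by a genuinely different mechanism. The paper selects a face of $F_0$ of minimal dimension still meeting $\row(X)$, notes the intersection must lie in its relative interior, and derives a contradiction from $\codim \leq \rk(X)$ by constructing an affine line in $\row(X)$ inside the affine span of that face and invoking the polytope lemma that such a line through the relative interior must hit a proper face. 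You instead take an extreme point $v$ of the bounded polyhedron $\row(X)\cap F_0$, read off $\rk(X_T)=\rk(X)$ directly from extremality, and conclude $|T| > \rk(X)$ from the rank count $\rk(X_T) \leq \rk(X_S) + |T\setminus S| \leq |T|-1$. Your version is direct rather than by contradiction, dispenses with the affine-line lemma, and makes visible exactly where the strict inequality originates (the kernel direction joining two distinct solutions); the paper's version has the advantage that it transfers essentially verbatim to the general polytope setting of Theorem~\ref{thm:uni-pen}, where faces are no longer coordinate products and your coordinatewise rank bookkeeping would have to be replaced by arguments about the vertex sets $V_I$.
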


We illustrate Theorem~\ref{thm:uni-BP} in
Figures~\ref{fig:BP}(\subref{subfig:BP-non_unique}) and
\ref{fig:BP}(\subref{subfig:BP-unique}).

\begin{figure}[htbp]

\centering

\begin{subfigure}{\textwidth}
\includegraphics[width=\textwidth]{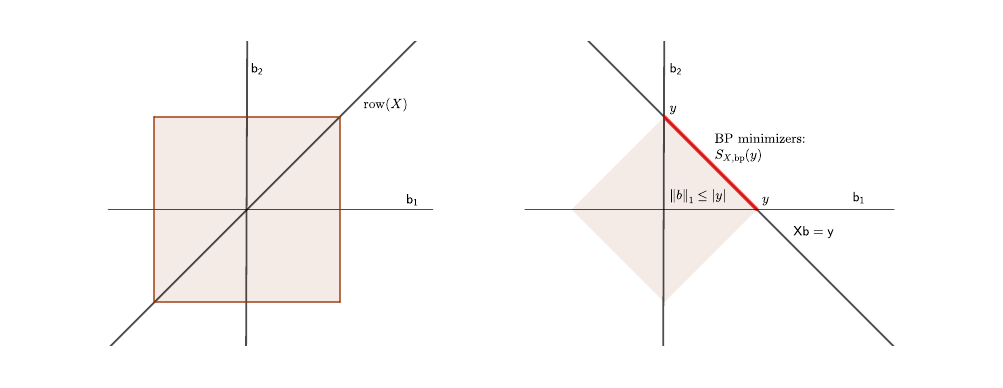}
\subcaption{\label{subfig:BP-non_unique} Let $X = (1 \;\; 1)$. On the
left-hand side, we see that $\row(X)$ intersects a face of the unit
square whose codimension 2 is larger than $\rk(X) = 1$ (which are the
vertices in this example). Therefore, by Theorem \ref{thm:uni-BP},
there exists $y \in \R$ for which the BP minimizer is not unique. The
right-hand side illustrates that, indeed, for an arbitrary $y \in
\R\setminus\{0\}$, the set $\Sbp(y)$ (the red segment) is not a
singleton.}
\end{subfigure}

\begin{subfigure}{\textwidth}
\includegraphics[width=\textwidth]{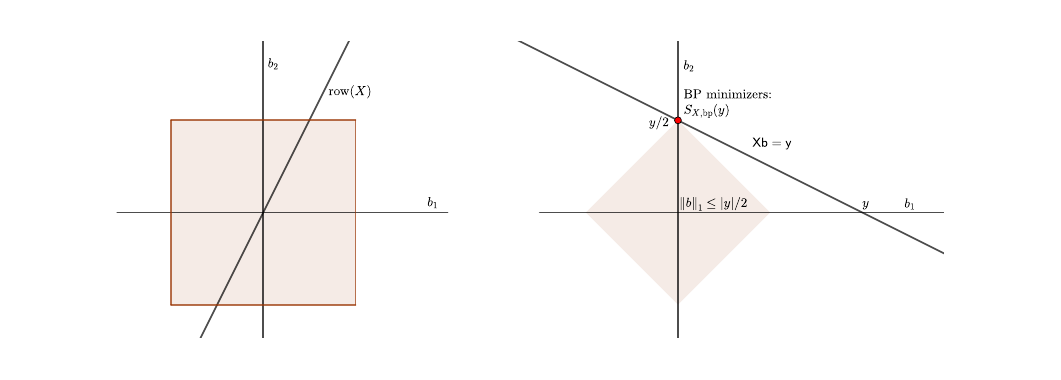}
\subcaption{\label{subfig:BP-unique}  Let $X = (1 \;\;
2)$. On the left-hand side, we see that $\row(X)$ does not intersect
any face of the unit square whose codimension is larger than $\rk(X) =
1$ (which are the vertices in this example). Therefore, by
Theorem~\ref{thm:uni-BP}, the BP minimizer is unique for all $y \in
\R$. The right-hand side illustrates that for an arbitrary $y \in \R$,
the set $\Sbp(y)$ (in red) is, indeed, a singleton.}
\end{subfigure}

\caption{\label{fig:BP} Illustration of Theorem~\ref{thm:uni-BP}.}

\end{figure}

In the following proposition, we show that the necessary and
sufficient condition given in Theorem~\ref{thm:uni-pen} and therefore
also the one given in Theorem~\ref{thm:uni-BP} is weak. More
precisely, we establish that the set of $X \in \R^{n \times p}$ for
which the necessary and sufficient condition given in
Theorem~\ref{thm:uni-pen} does not hold, is negligible with respect to
the Lebesgue measure.

\begin{proposition}
\label{prop:cond-weak} Let $\mu$ be the Lebesgue measure on
$\R^{n\times p}$ and let $\|.\|$ be a norm on $\R^p$ whose unit ball
is given by a polytope. The following equality holds
$$
\mu\left(\left\{X\in \R^{n\times p} : \exists y \in \R^n \text{ with }
|\Spen(y)| > 1\right\}\right) = 0.
$$
\end{proposition}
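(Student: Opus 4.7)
The plan is to apply Theorem~\ref{thm:uni-pen}, which expresses the set in question as
$$
\bigcup_F U_F, \qquad U_F = \{X \in \R^{n \times p} : \codim(F) > \rk(X) \text{ and } \row(X) \cap F \ne \emptyset\},
$$
the union being taken over the finitely many faces $F$ of the polytope $B^*$. Since a finite union of null sets is null, it suffices to show $\mu(U_F) = 0$ for each $F$; I would split into two cases according to whether $c := \codim(F)$ satisfies $c \le n$ or $c > n$. If $c \le n$, then $c \le \min(n, p)$ and the rank condition $\rk(X) < c$ alone confines $X$ to the common vanishing locus of all $c \times c$ minors of $X$, a proper real algebraic subvariety of $\R^{n \times p}$, which is Lebesgue-null.

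If $c > n$, the rank condition is automatic since $\rk(X) \le n < c$, and $U_F \subseteq \{X : \row(X) \cap \aff(F) \ne \emptyset\}$, so it suffices to show this latter set is null. Since $\|.\|^*$ is a norm, the origin lies in the interior of $B^*$ and hence on no proper face, so $\aff(F) = v + W$ with $W$ a $(p-c)$-dimensional linear subspace and $v \notin W$. Let $\pi : \R^p \to \R^p/W$ be the quotient (a $c$-dimensional space), so that $\pi(v) \ne 0$, and write $X_1, \ldots, X_n$ for the rows of $X$. The condition $\row(X) \cap \aff(F) \ne \emptyset$ is then equivalent to $\pi(v) \in \mathrm{span}(\pi(X_1), \ldots, \pi(X_n))$, which in particular forces the $c \times (n+1)$ matrix $[\pi(v) \mid \pi(X_1) \mid \cdots \mid \pi(X_n)]$ to have rank at most $n$. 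Since $c \ge n+1$ and $\pi(v) \ne 0$, one can choose $\pi(X_i)$'s making this matrix have rank $n+1$ (take them together with $\pi(v)$ linearly independent), so the rank-$\le n$ condition defines a proper algebraic subvariety of $\R^{n\times c}$ (cut out by its $(n+1) \times (n+1)$ minors), hence is Lebesgue-null. Pulling back via the linear surjection $X \mapsto (\pi(X_1), \ldots, \pi(X_n))$ and invoking Fubini yields $\mu(U_F) = 0$.

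The main obstacle lies in the case $c > n$: translating the geometric incidence $\row(X) \cap F \ne \emptyset$ into a tractable algebraic condition via the quotient by $W$, exploiting $0 \notin \aff(F)$ so that $\pi(v) \ne 0$ and the resulting algebraic condition is non-trivial, and finally transferring the null-set property from $\R^{n \times c}$ back to $\R^{n \times p}$ by Fubini.
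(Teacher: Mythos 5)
Your argument is correct, and it reaches the conclusion by a genuinely different mechanism than the paper, even though both start from the same reduction via Theorem~\ref{thm:uni-pen} to a finite union over faces and both exploit the key geometric fact that $0 \notin \aff(F)$ for a proper face $F$ of $B^*$. The paper first replaces Lebesgue measure by the standard Gaussian measure on $\R^{n\times p}$, dismisses the low-codimension faces (and the case $n \ge p$) by noting that $\rk(Z) = \min\{n,p\}$ almost surely, and then, after writing $\aff(F_0) = \{x : Ax = v\}$ with $A$ having orthonormal rows (the analogue of your quotient map $\pi$), proves $\P(v \in \col(AZ')) = 0$ via a rotational-invariance argument on the distribution of a Gaussian determinant (Lemma~\ref{lem:almost_surely}). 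You instead stay entirely with Lebesgue measure and deterministic linear algebra: the low-codimension faces are handled by observing that $\rk(X) < c \le \min\{n,p\}$ is the vanishing locus of all $c\times c$ minors, and the high-codimension faces by showing the incidence condition forces the augmented matrix $[\pi(v)\mid \pi(X_1)\mid\cdots\mid\pi(X_n)]$ to be rank-deficient, a proper algebraic condition that pulls back to a null set under the linear surjection $X \mapsto (\pi(X_1),\dots,\pi(X_n))$. Your route is more elementary (no probabilistic input) and yields slightly more, namely that the exceptional set is contained in a finite union of proper algebraic subvarieties; the paper's route is shorter because the almost-sure rank of a Gaussian matrix disposes of the first case for free. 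One cosmetic remark: in your first case you should note (or simply observe that $U_F = \emptyset$ when $c = 0$) that the minor locus is proper precisely because $1 \le c \le \min\{n,p\}$, with $c \le p$ coming from $F$ being a non-empty subset of $\R^p$.
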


The following corollary is then straightforward given the fact that
the LASSO, which is covered by Theorem~\ref{thm:uni-pen}, and BP share
the same characterization for uniqueness.

\begin{corollary} \label{cor:cond-weak-BP}
Let $\mu$ be the Lebesgue measure on $\R^{n\times p}$, then the
following equality holds
$$
\mu\left(\left\{X\in \R^{n\times p} : \exists y \in \R^n \text{ with }
|\Sbp(y)| > 1 \right\}\right) = 0.
$$
\end{corollary}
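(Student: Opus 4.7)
The plan is to derive Corollary~\ref{cor:cond-weak-BP} as an immediate consequence of Proposition~\ref{prop:cond-weak} applied to the $l_1$-norm, exploiting that Theorems~\ref{thm:uni-pen} and \ref{thm:uni-BP} produce identical geometric criteria in that case. First, I would note that the unit ball of $\|.\|_1$ on $\R^p$ is the cross-polytope $\conv\{\pm e_1,\dots,\pm e_p\}$, hence a polytope, so Proposition~\ref{prop:cond-weak} is applicable to $\|.\| = \|.\|_1$ and yields
$$
\mu\bigl(\{X\in\R^{n\times p} : \exists y\in\R^n \text{ with } |\Slasso(y)|>1\}\bigr) = 0.
$$

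Next, I would observe that when $\|.\| = \|.\|_1$, the dual unit ball appearing in Theorem~\ref{thm:uni-pen} is precisely the cube $[-1,1]^p$, so the theorem says that non-uniqueness of the LASSO for some $y\in\R^n$ is equivalent to $\row(X)$ intersecting a face of $[-1,1]^p$ of codimension strictly greater than $\rk(X)$. This is word-for-word the condition characterizing non-uniqueness of basis pursuit in Theorem~\ref{thm:uni-BP}. Consequently,
$$
\{X\in\R^{n\times p} : \exists y\in\R^n \text{ with } |\Slasso(y)|>1\}
= \{X\in\R^{n\times p} : \exists y\in\col(X) \text{ with } |\Sbp(y)|>1\},
$$
and the equality of the two sets, combined with the Lebesgue-negligibility of the first one, yields the corollary.

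The argument is purely a matching of hypotheses between three already-established results, so I do not anticipate any technical obstacle; the only thing to be careful about is confirming that the dual of $\|.\|_1$ is indeed $\|.\|_\infty$ (so that the dual unit ball is exactly the hypercube of Theorem~\ref{thm:uni-BP}) and that the range of $y$ in the two statements, namely $y\in\R^n$ for LASSO and $y\in\col(X)$ for BP, is handled by the corresponding theorems in the same way. Both points are immediate, and the proof therefore reduces to a one-line application of the two preceding results.
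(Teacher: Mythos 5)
Your proposal is correct and is exactly the argument the paper intends: it invokes Proposition~\ref{prop:cond-weak} for $\|.\|_1$ and then identifies the LASSO and BP non-uniqueness sets via the shared geometric criterion of Theorems~\ref{thm:uni-pen} and \ref{thm:uni-BP}, which is what the paper means by the two methods sharing ``the same characterization for uniqueness.'' No discrepancy to report.
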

By taking the appropriate norms in Proposition~\ref{prop:cond-weak},
and by Corollary~\ref{cor:cond-weak-BP}, one may deduce that the
necessary and sufficient conditions for uniqueness of SLOPE, PACS,
fused, clustered and classical LASSO are weak. However, one should be
aware that Proposition~\ref{prop:cond-weak} does not mean that this
condition always occurs in practice! %{\bf For example, for BP (or
% LASSO), when
% $p >
%n$ and $X \in \{-1,1\}^{n \times p}$, one can always pick $y \in
%\col(X)$ for which the set of minimizers $\Sbp(y)$ is not a singleton
%(or, for any $\lambda > 0$, one can pick $y \in \R^n$ for which the
%set of minimizers $\SlassoL(y)$ is not a singleton).}
For example, for BP (or LASSO), when $p > n$ and $X \in [-1,1]^{n
\times p}$ is a matrix having  a row  with at least $n+1$ elements in
$\{-1,1\}$ then, one can  pick $y \in \col(X)$ for which the set of
minimizers $\Sbp(y)$ is not a singleton (or, for any $\lambda > 0$,
one can pick $y \in \R^n$ for which the set of minimizers
$\SlassoL(y)$ is not a singleton). Matrices having entries in
$\{-1,1\}$ appear in several theoretical works, such as
\cite{Rauhut10} and \cite{TardivelEtAl18}, and are used for
applications in radar and wireless communication \citep[see
e.g.][]{Romberg09,HauptEtAl10}. Moreover, \cite{DupuisVaiter19TR}
recently illustrated that the matrix $X \in [-1,1]^{5000\times 6000}$,
having most entries in $\{-1,1\}$, and the vector $y \in \R^{5000}$
provided by the dataset ``gisette'' give a set of minimizers
$\SlassoL(y)$ which is not a singleton once $\lambda > 0$ is small
enough.

%%%%%%%%%%%%%%%%%%%%%%%%%%%%%%%%%%%%%%%%%%%%%%%%%%%%%%%%%%%%%%%%%%%%%%%%%%%%
%%%%%%%%%%%%%%%%%%%%%%%%%%%%%%%%%%%%%%%%%%%%%%%%%%%%%%%%%%%%%%%%%%%%%%%%%%%%
\section{Pattern selection properties} \label{sec:sparse}
%%%%%%%%%%%%%%%%%%%%%%%%%%%%%%%%%%%%%%%%%%%%%%%%%%%%%%%%%%%%%%%%%%%%%%%%%%%%%
%%%%%%%%%%%%%%%%%%%%%%%%%%%%%%%%%%%%%%%%%%%%%%%%%%%%%%%%%%%%%%%%%%%%%%%%%%%

The geometric considerations around Theorems~\ref{thm:uni-pen} and
\ref{thm:uni-BP} can also provide insights on the pattern selection
aspects of the method under consideration. The keystone is to
associate a pattern with a face of the polytope $B^*$, the unit ball
of the dual norm. For LASSO and BP, we exploit the fact that each face
of the unit cube corresponds to a sign vector and show that the faces
intersected by the row span of $X$ provide the accessible sign vectors
for these estimators, see Appendix~\ref{app:LASSO_BP}. We
take a similar, but more sophisticated approach for SLOPE in Section
\ref{subsec:sparse-SLOPE} where the patterns we consider also carry
information about the clustering phenomenon of the method.

In Section~\ref{subsec:sparse_nullpolytope}, we take a different angle
and characterize the SLOPE null polyhedron and its connection to the
sparsity and clustering property of this method. For the LASSO, it is
known that the estimation residuals are the projection of $y$ onto the
LASSO null polyhedron. We also further generalize this fact to arbitrary
norm-penalized least-squares estimation.

%%%%%%%%%%%%%%%%%%%%%%%%%%%%%%%%%%%%%%%%%%%%%%%%%%%%%%%%%%%%%%%%%%%%%%%%%%%

%%%%%%%%%%%%%%%%%%%%%%%%%%%%%%%%%%%%%%%%%%%%%%%%%%%%%%%%%%%%%%%%%%%%%%%%%%%%
\subsection{Accessible patterns for SLOPE} \label{subsec:sparse-SLOPE}
%%%%%%%%%%%%%%%%%%%%%%%%%%%%%%%%%%%%%%%%%%%%%%%%%%%%%%%%%%%%%%%%%%%%%%%%%%%%

We now turn to accessible patterns for SLOPE, whose norm is given by
$\|b\|_w = \sum_{j=1}^p w_j |b|_{(j)}$, where $|b|_{(1)} \geq \dots
\geq |b|_{(p)}$, as introduced before. For the remainder of
Section~\ref{sec:sparse}, we assume that the weight vector $w$ of the
satisfies
$$
w_1 > \dots > w_p > 0,
$$
i.e., that all components non-zero and strictly decreasing. (This
assumption is not needed for applying  Theorem~\ref{thm:uni-pen} to
SLOPE, since $w_1 > 0$ and decreasing components are sufficient for
$\|.\|_w$ to be a norm.) We introduce a more sophisticated notion of a
``pattern'' chosen by SLOPE compared to sign vectors that can account
for the clustering property which is not shared by LASSO or BP.

\begin{definition} \label{def:SLOPE-patterns}
We say that a vector $m \in \Z^p$ is a SLOPE pattern, if either $m = 0$,
or, if for all $l \in [\|m\|_\infty]$, there exists $j \in [p]$ such
that $|m_j| = l$. We denote the set of all SLOPE patterns of dimension
$p$ by $\mM_p$. Moreover, for $x \in \R^p$, we define $\patt(x) \in
\mM_p$ through the following.

\begin{enumerate}

\item $\sign(\patt(x)) = \sign(x)$

\item $|x_i| = |x_j| \implies |\patt(x)_i| = |\patt(x)_j|$ 

\item $|x_i| > |x_j| \implies |\patt(x)_i| > |\patt(x)_j|$

\end{enumerate}

\end{definition}

\begin{example}

For $x = (3.1,-1.2,0,-3.1)'$, we have $\patt(x) = (2,-1,0,-2)'$. For
$x \in \R^4$ with $\patt(x) = (0,2,1,-2)'$, we have $\sign(x) =
(0,1,1,-1)'$ and $|x_2| = |x_4| > |x_3| > x_1 = 0$. The set of all
SLOPE patterns in $\R^2$ is given by
\begin{multline*}
\mM_2 = \{(0,0)',(1,0)',(-1,0)',(0,1)',(0,-1)',(1,1)',(1,-1)',(-1,1)',(-1,-1)',\\
(2,1)',(-2,1)', (2,-1)',(-2,-1)',(1,2)',(-1,2)',(1,-2)',(-1,-2)'\}.
\end{multline*}

\end{example}

The main geometric object of study in this section is the signed
permutahedron, which constitutes the dual of the sorted-$\ell_1$-norm unit ball
(Proposition~\ref{prop:subdiff-SLOPE} in
Appendix~\ref{app:proof-SLOPE}) and is defined as
$$
P_w^\pm = \conv\left\{(\sigma_1 w_{\pi(1)},\dots,\sigma_p
w_{\pi(p)})' : \sigma_1,\dots,\sigma_p \in \{-1,1\}, \pi \in
\mS_p\right\}.
$$
The shape of this polytope is illustrated in
Figure~\ref{fig:SLOPE-patterns} (in two dimensions) and in
Figure~\ref{fig:SLOPE-patterns3d} (in three dimensions). Also of
importance will be the permutahedron, defined by
$$
P_w = \conv\left\{(w_{\pi(1)},\dots,w_{\pi(p)})' : \pi \in \mS_p\right\}.
$$
The permutahedron is, in fact, a face of the signed permutahedron
$P_w^\pm$. We denote the subdifferential of the sorted-$\ell_1$-norm at $x \in
\R^p$ by $\dnormx{w}(x)$. Any $\dnormx{w}(x)$ is a face of $P_w^\pm$,
which we shall denote by $F_w(x)$ in the following.

SLOPE patterns $m$ having only positive components can be interpreted as
an ordered partition of $[p]$, where the the smallest and largest
element of this partition is the set $\{j : m_j = 1\}$ and the set
$\{j : m_j = \|m\|_\infty\}$, respectively. It is well known that
there is a one-to-one relationship between the elements of an ordered
partition and the faces of the permutahedron \citep[see
e.g.][]{MaesKappen92,Simion97,Ziegler12}. Instigated by this, we show
in Theorem~\ref{thm:SLOPE-pattern} that this result can, indeed, be
extended to a one-to-one relationship between all SLOPE patterns and the
non-empty faces of the signed permutahedron, which we denote by
$\mF_0(P_w^\pm)$.

\begin{theorem} \label{thm:SLOPE-pattern}
The mapping $m \in \mM_p \mapsto F_w(m) = \dnormx{w}(m)$ is a
bijection between the SLOPE patterns $\mM_p$ and $\mF_0(P_w^\pm)$, the
non-empty faces of the signed permutahedron $P_w^\pm$. In addition, the
following holds.

\begin{enumerate}

\item The codimension of $F_w(m)$ is given by $\|m\|_\infty$.

\item We have $F_w(x) = F_w(\patt(x))$.

\end{enumerate}

\end{theorem}

The assumption that components of $w$ are strictly decreasing and
non-zero is important. For example, if $w_1 = \dots = w_p > 0$, the
signed permutahedron is just a cube and clearly, there is no
one-to-one relationship between the set SLOPE patterns and the set of
faces of the cube. A similar situation arises if $w$ contains zero
components. As can be seen when $p = 2$ and $w_2 = 0$, the
sorted-$\ell_1$-norm is the supremum norm and the corresponding dual
unit ball is the unit cross-polytope in $\R^2$, whose faces cannot
bijectively be mapped to $\mM_2$ given in the example above.

\begin{example}
We now describe the faces $F_w(m)$, $m \in \mM_2$, of the signed
permutahedron $P_w^\pm$ when $w = (3.5,1.5)'$. In the following, we
use the fact that -- up to an orthogonal transformation described in
Lemma~\ref{lem:trans-prop} -- $F_w(m)$ is equal to $F_w(\tilde m)$ for
some $\tilde m$, a non-negative and non-increasing SLOPE pattern. The
relationship between the SLOPE patterns $m \in \mM_2$ and faces of the
signed permutahedron $P_w^\pm$ are listed below and illustrated in
Figure~\ref{fig:SLOPE-patterns}. Note that $\codim(F_w(m)) =
\|m\|_\infty$.

\bigskip

\noindent
\begin{tabular}{llcl} 
pattern $\tilde m$ & face $F_w(\tilde m)$ & codim.  
& faces $F_w(m)$ isometric to $F_w(\tilde m)$ \\ \hline \\ [-1.5ex]
$\tilde m = (0,0)'$ & signed permutahedron $P_w^\pm$ & 0 & --\\[0.5ex]
$\tilde m = (1,0)'$ & segment $\{3.5\} \times [-1.5,1.5]$ & 1 & $m \in \{(-1,0)',\pm(0,1)'\}$
 \\[0.5ex]
$\tilde m = (1,1)'$ & permutahedron $P_w$ & 1 & $m \in \{(-1,-1)',\pm(1,-1)'\}$ \\ [0.5ex]
$\tilde m = (2,1)'$ & point: $(3.5,1.5)'$ & 2 &  
$m \in \{(-2,-1)',\pm(2,-1)',\pm(1,2)',\pm(1,-2)'\}$ \\	
\end{tabular}

\begin{figure}[htbp]

\centering 
\includegraphics{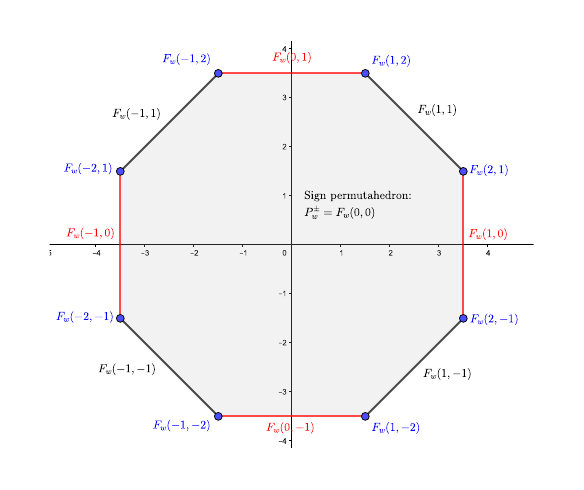}

\caption{\label{fig:SLOPE-patterns} Illustration of the relationship
between the SLOPE patterns and the faces of the signed permutahedron
$P_w^\pm$ for $w=(3.5,1.5)'$ through subdifferential calculus, see
Proposition~\ref{prop:subdiff-face} in Appendix~\ref{app:subdiff-norm}
and Proposition~\ref{prop:subdiff-SLOPE} in
Appendix~\ref{app:proof-SLOPE}. Note that $F_w(m) = \dnormx{w}(m)$.
Faces having the same color are isometric. One may notice that
$\codim(F_w(m)) = \|m\|_\infty$.}

\end{figure}

\end{example}

Analogously to the accessible sign vectors for LASSO and BP, for a
given $X$, we introduce the notion of accessible SLOPE patterns.
 
\begin{definition}[Accessible SLOPE pattern] \label{def:SLOPE-acc}
Let $X \in \R^{n \times p}$ and $m \in \mM_p$. We say that $m$ is an
accessible SLOPE pattern with respect to $X$ if
$$
\exists y \in \R^n \text{ and } \exists \hat\beta \in \Sslope(y)
\text{ such that } \patt(\hat\beta) = m.
$$
\end{definition}

We now provide a geometric and analytic characterization of accessible
SLOPE patterns.

\pagebreak[2]

\begin{theorem}[Characterization of accessible SLOPE patterns] \label{thm:SLOPE-acc}
Let $X \in \R^{n \times p}$. 

\begin{enumerate}

\item Geometric characterization: A SLOPE pattern $m \in \mM_p$ is
accessible with respect to $X$ if and only if $\row(X)$ intersects the
face $F_w(m)$.

\item Analytic characterization: A SLOPE pattern $m \in \mM_p$ is
accessible with respect to $X$ if and only if the implication
$$
Xb = Xm \implies \|b\|_w \geq \|m\|_w
$$
holds.

\end{enumerate}

\end{theorem}

We point out that the analytic characterization allows to check
accessibility of a particular SLOPE pattern by in fact minimizing a
BP-like problem where the $\ell_1$-norm is replaced by the
sorted-$\ell_1$-norm. This in turn can give insight on whether the
corresponding face of the signed permutahedron is intersected by
$\row(X)$.

Also note that the set of accessible SLOPE patterns is invariant by
scaling $w$ with a constant, since $\row(X)$ intersects $F_w(m)$ if
and only if $\row(X)$ intersects $F_{\lambda w}(m)$ with $\lambda >
0$. The following corollary, which is in line with Theorem~2.1 very
recently given in \cite{KremerEtAl22}, is a straightforward
consequence of Theorems~\ref{thm:uni-pen}, \ref{thm:SLOPE-pattern} and
\ref{thm:SLOPE-acc}.
\begin{corollary}
\label{cor:sparsitySLOPE} Let $X \in \R^{n \times p}$. If $\row(X)$
does not intersect any face of $P_w^\pm$ with codimension larger than
$\rk(X)$, then for all $y \in \R^n$, $\hat\beta_w(y)$, the unique
element of $\Sslope(y)$, satisfies
$\|\patt(\hat{\beta}_w(y))\|_\infty \leq \rk(X)$.
\end{corollary}
Corollary~\ref{cor:sparsitySLOPE} generalizes the well-known fact
that, when uniqueness occurs, the LASSO minimizer has less than
$\rk(X)$ non-null components. Indeed, the above corollary shows that
when the SLOPE minimizer is unique, the number of non-null clusters is
less than or equal to $\rk(X)$.

\begin{example}
We illustrate the criterion for accessible SLOPE patterns from
Theorem~\ref{thm:SLOPE-acc} for $w = (5.5,3.5,1.5)'$ and $X$ given by
$$
X = \begin{pmatrix} 8 & 5 & 8 \\ 10 & 1.25 & -6 \end{pmatrix}.
$$
Table~\ref{tab:SLOPE-acc} lists all accessible non-null SLOPE patterns
($m=0$ is always accessible through $y=0$), the geometric illustration
is shown in Figure~\ref{fig:SLOPE-patterns3d}.

\begin{table}[htbp]

\begin{tabular}{l l l l l}
colour & type & intersection $\neq \emptyset$ & face intersected isom. to & SLOPE patt. 
\\ \hline \\[-2ex]
\textcolor{orange}{orange} & segments & $\row(X) \cap F_w(\pm
(1,0,0))$ & $\{5.5\} \times P^\pm_{(3.5,1.5)}$ & $\pm (1,0,0)$ \\
\textcolor{red}{red} & segments & $\row(X) \cap F_w(\pm (1,1,1))$ & $P_{(5.5,3.5,1.5)}$ & 
$\pm (1,1,1)$ \\ 
black & segments & $\row(X) \cap F_w(\pm (0,0,1))$ & $\{5.5\} \times
P^\pm_{(3.5,1.5)}$ & $\pm (0,0,1)$ \\
\textcolor{pink}{pink} & segments & $\row(X) \cap F_w(\pm (-1,0,1))$ &
$P_{(5.5,3.5)} \times [-1.5,1.5]$ & $\pm (-1,0,1)$ \\
\textcolor{purple}{purple} & points  &$\row(X) \cap F_w(\pm (2,0,-1))$
& $\{5.5\} \times \{3.5\} \times [-1.5,1.5]$ & $\pm (2,0,-1)$ \\
\textcolor{green}{green}  & points & $\row(X) \cap F_w(\pm (2,1,1))$ & $\{5.5\} \times
P_{(3.5,1.5)}$ & $\pm(2,1,1)$ \\
\textcolor{blue}{blue} & points & $\row(X) \cap F_w(\pm (1,1,2))$ & $\{5.5\}\times
P_{(3.5,1.5)}$ & $\pm (1,1,2)$ \\
\textcolor{yellow}{yellow} & points & $\row(X) \cap F_w(\pm (-1,0,2))$ & $\{5.5\}\times
\{3.5\} \times [-1.5,1.5]$ & $\pm (-1,0,2)$
\end{tabular}

\medskip

\caption{\label{tab:SLOPE-acc} Accessible SLOPE patterns with respect to 
$X = \left( \protect\begin{smallmatrix} 8 & 5 & 8 \\ 10 & 1.25 & -6 \protect\end{smallmatrix}\right)$ 
and $w = (5.5,3.5,1.5)'$.}

\end{table}

\begin{figure}[htbp] 

\centering

\includegraphics{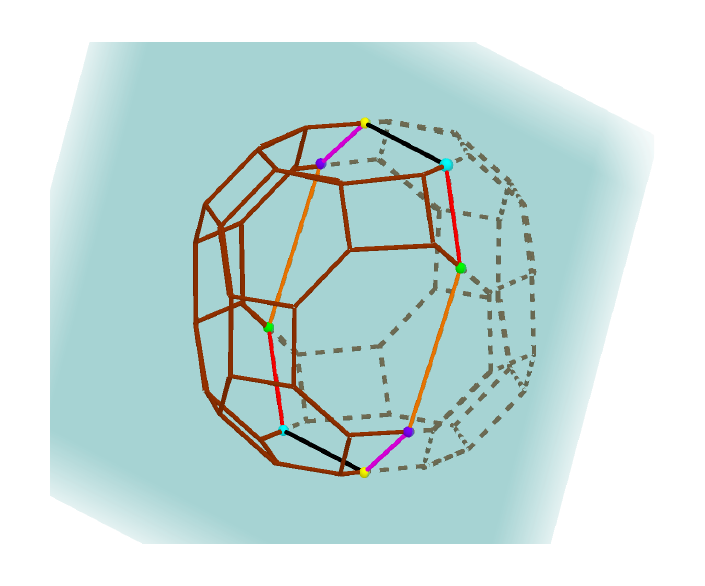}

\caption{\label{fig:SLOPE-patterns3d} Illustration of the signed
permutahedron $P_w^\pm$ (in brown) and the plane $\row(X)$ (in light
blue). Because $\rk(X) = 2$ and $\row(X)$ does not intersect any
vertex of $P_w^\pm$ (the faces with codimension equal to 3), the SLOPE
estimator $\hat\beta_w(y)$ is unique for all values of $y \in \R^2$.
Colored segments and points are the intersections between $\row(X)$
and the faces of $P_w^\pm$, determining the accessible SLOPE patterns
shown in Table~\ref{tab:SLOPE-acc}. For example, $m = (2,1,1)'$ is an
accessible SLOPE pattern, which implies that there exists $y \in \R^2$
for which the SLOPE minimizer $\hat\beta_w(y)$ satisfies
$\hat\beta_w(y)_1 > \hat\beta_w(y)_2 = \hat\beta_w(y)_3 > 0$. In
addition, since $m=(2,1,0)'$ is not an accessible pattern, one cannot pick
$y \in \R^2$ for which the SLOPE minimizer satisfies $\hat\beta_w(y)_1
> \hat\beta_w(y)_2 > \hat\beta_w(y)_3 = 0$.}

\end{figure}

\end{example}

%\pagebreak

%%%%%%%%%%%%%%%%%%%%%%%%%%%%%%%%%%%%%%%%%%%%%%%%%%%%%%%%%%%%%%%%%%%%%%%%%%%%
%%%%%%%%%%%%%%%%%%%%%%%%%%%%%%%%%%%%%%%%%%%%%%%%%%%%%%%%%%%%%%%%%%%%%%%%%%%%
\subsection{The SLOPE null polyhedron and a general result}
\label{subsec:sparse_nullpolytope}
%%%%%%%%%%%%%%%%%%%%%%%%%%%%%%%%%%%%%%%%%%%%%%%%%%%%%%%%%%%%%%%%%%%%%%%%%%%%
%%%%%%%%%%%%%%%%%%%%%%%%%%%%%%%%%%%%%%%%%%%%%%%%%%%%%%%%%%%%%%%%%%%%%%%%%%%%

In the previous section, we gave a description of accessible SLOPE
patterns based on the intersection of $\row(X)$ with the signed
permutahedron $P_w^\pm$. In this section, our aim is the following:
Given an accessible pattern $m \in \mM_p$, we want to provide the set of
$y \in \R^n$ for which there exists $\hat\beta \in \Sslope(y)$ with
$\patt(\hat\beta) = m$. In other words, we want to describe the set
$$
A_w(m) = \{y \in \R^n : \exists \hat \beta\in \Sslope(y) \text{ \rm where } \patt(\hat\beta) = m\}.
$$
Note that when the SLOPE minimizer is unique, the sets $A_w(m)$ and
$A_w(\tilde m)$ are disjoint for $m \neq \tilde m$, whereas $A_w(m)
\cap A_w(\tilde m) \neq \emptyset$ might occur in case of
non-uniqueness. Clearly, the null pattern $m = 0$ is accessible. The
corresponding set $A_w(0)$, called the \emph{SLOPE null polyhedron},
given by
$$
A_w(0) = \{y \in \R^n : \|X'y\|_w^* \leq 1\}
$$
by Proposition~\ref{prop:ch-pen}. This is the set of all $y$ such that
$X'y \in P_w^\pm$, which is again a polytope. 
The proposition below shows that the faces of this polytope $N_w(m) =
\{f \in \R^n : X'f \in F_w(m)\}$ for the \emph{accessible} SLOPE
patterns $m$ are the cornerstone to describe the sets $A_w(m)$.

\begin{proposition} \label{prop:SLOPE-As}
Let $X \in \R^{n \times p}$. The SLOPE pattern $m \in \mM_p$ is an
accessible SLOPE pattern if and only if $N_w(m) = \{f \in \R^n : X'f \in
F_w(m)\} \neq \emptyset$. In that case, the set $A_w(m)$ is given by
$$
A_w(m) = \left\{y = f + Xb : f \in N_w(m), \patt(b) = m\right\}.
$$
\end{proposition}

Note that Proposition~\ref{prop:SLOPE-As} yields another
characterization of accessible SLOPE patterns, namely that $m$ is
accessible if and only if $N_w(m)$ is a non-empty face of the SLOPE
null polyhedron. In case of non-uniqueness, different patterns may yield
the same face, so one should be aware that there is no bijection
between the accessible SLOPE patterns and the faces of the SLOPE null
polytope. Also note that if $\rk(X) = n$ and we are given the
intersection between $\row(X)$ and $F_w(m)$ for some accessible SLOPE
pattern $m$, we can write $N_w(m) = (XX')^{-1}X(\row(X) \cap F_w(m))$
since
$$
f \in N_w(m) \iff X'f \in \row(X) \cap F_w(m) \iff f \in (XX')^{-1}X(\row(X) \cap F_w(m)).
$$

\begin{example}

Figure~\ref{fig:SLOPE-patterns3d} illustrates the accessible SLOPE
patterns from Theorem~\ref{thm:SLOPE-acc} for $w = (5.5,3.5,1.5)'$ and
$$
X = \begin{pmatrix} 8 & 5 & 8 \\ 10 & 1.25 & -6 \end{pmatrix}.
$$
Now, for every accessible SLOPE pattern,
Figure~\ref{fig:SLOPE-nullpolytope} below provides the set $A_m = \{y
\in \R^2 : \exists \hat\beta \in \Sslope(y) \text{ \rm where }
\patt(\hat\beta) = m\}$ and the SLOPE null polyhedron.

\begin{figure}[htbp]

\centering
\includegraphics[scale=0.3]{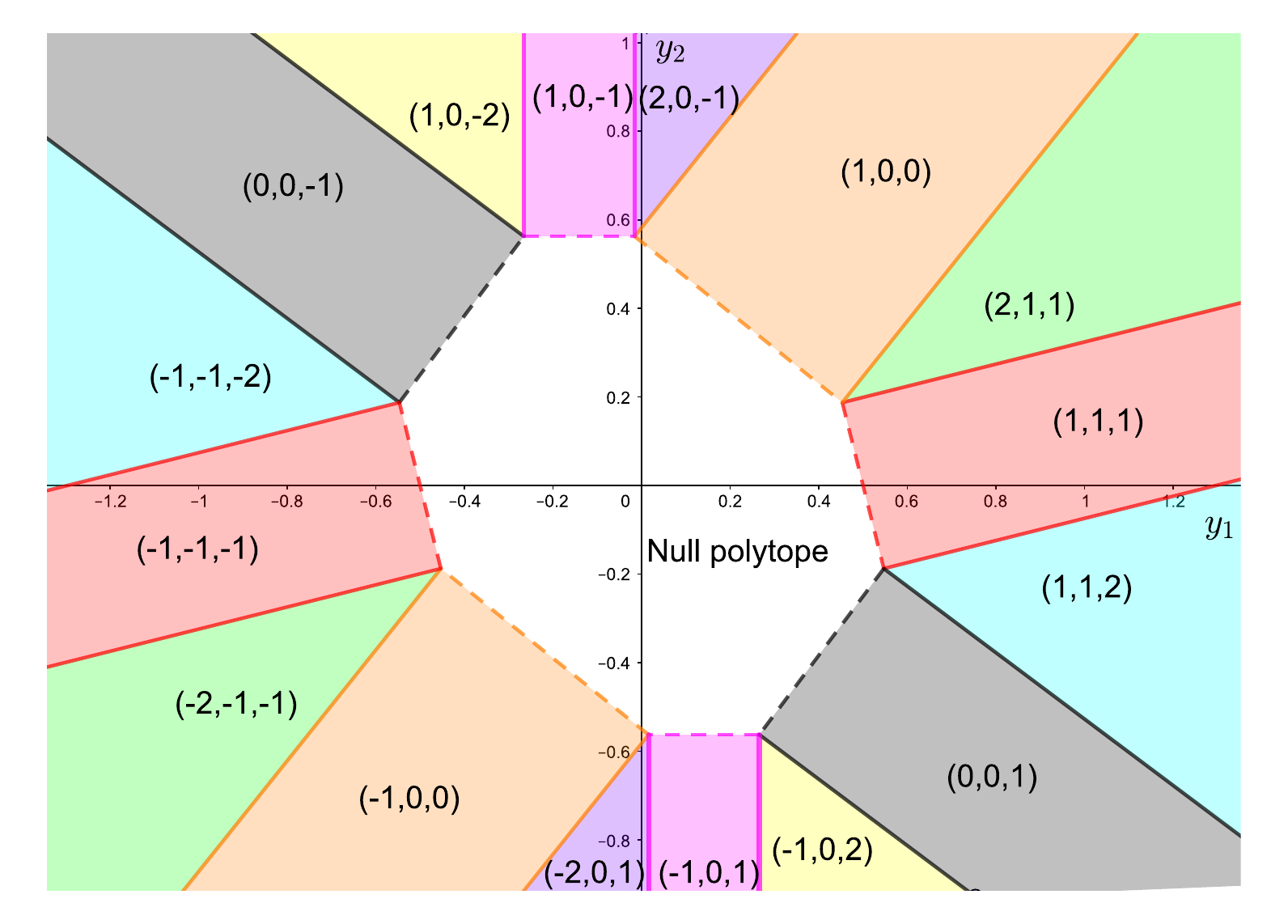}

\caption{\label{fig:SLOPE-nullpolytope} Illustration of the SLOPE null
polytope and the accessible patterns for $X = 
\left(\protect\begin{smallmatrix} 8 & 5 & 8 \\ 10 & 1.25 & -6 \protect\end{smallmatrix}\right)$ 
and $w = (5.5,3.5,1.5)'$. The resulting accessible patterns are $\{\pm
(1,0,0), \pm (1,1,1), \pm (0,0,1), \pm (-1,0,1), \pm (2,0,-1), \pm
(2,1,1), \pm (1,1,2), \pm (-1,0,2)\}$, each associated with a face of
the polytope. Depicted also are the sets $A_w(m) =\{y \in \R^2 : \exists
\hat\beta \in \Sslope(y) \text{ \rm with } \patt(\hat\beta) = m\}$
for each accessible pattern.}

\end{figure}

\end{example}

Note that the SLOPE null polyhedron $A_w(0)$ can also be interpreted
as the set of SLOPE residuals in the sense that $\hat u = y -
X\hat\beta$ is the projection of $y$ onto $A_w(0)$ whenever $\hat\beta
\in \Sslope(y)$ \citep{Minami20}. Or put differently again, we can
decompose $y$ as $y = X\hat\beta + \hat u$, where $X\hat\beta$ is the
SLOPE fit and $\hat u \in A_w(0)$, the set of all values that lead to
a zero SLOPE minimizer. This property is well known also for the
LASSO, \citep[c.f.][]{TibshiraniTaylor12}. In fact, it is
straightforward to see from Proposition~\ref{prop:ch-pen} that the
same considerations hold for all problems as defined in
(\ref{eq:pen}). For completeness, we summarize this in the following
proposition which holds for arbitrary norms.

\begin{proposition} \label{prop:convexnullset}
Let $X \in \R^{n \times p}$ and $y \in \R^n$ and let $\|.\|$ be a norm
on $\R^p$. Define the convex null set $A_\emptyset = \{u \in \R^n :
\|X'u\|^* \leq 1\}$. We then have $\Spen(u) = \{0\}$ for all $u \in
A_\emptyset$, and any $\hat\beta \in \Spen(y)$ satisfies $y =
X\hat\beta + \hat u$ with $\hat u \in A_\emptyset$. Moreover, $\hat u$
is the projection of $y$ onto $A_\emptyset$.
\end{proposition}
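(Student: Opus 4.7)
The plan is to lean entirely on the first-order optimality condition from Proposition~\ref{prop:ch-pen}, which characterizes $\hat\beta\in\Spen(y)$ by $X'(y-X\hat\beta)\in\dnorm(\hat\beta)$, combined with the two standard facts about subgradients of a norm recalled in the appendix: (i) $\dnorm(0)=B^*=\{s:\|s\|^*\le 1\}$, and (ii) for any $b$ and any $s\in\dnorm(b)$ one has $\|s\|^*\le 1$ and $s'b=\|b\|$.

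First I would show the residual lies in the null set. Given $\hat\beta\in\Spen(y)$, set $\hat u=y-X\hat\beta$. By the optimality condition $X'\hat u\in\dnorm(\hat\beta)$, and (ii) yields $\|X'\hat u\|^*\le 1$, i.e.\ $\hat u\in A_\emptyset$. This is half a line once the characterization is in hand.

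Next I would verify $\Spen(u)=\{0\}$ for every $u\in A_\emptyset$. Existence: $0\in\Spen(u)$ iff $X'u\in\dnorm(0)=B^*$, which holds by definition of $A_\emptyset$. Uniqueness: suppose $\beta\in\Spen(u)$. Comparing the objective at $\beta$ with the objective at $0$ gives
$$\tfrac12\|X\beta\|^2+\|\beta\|\le (X'u)'\beta\le \|X'u\|^*\|\beta\|\le\|\beta\|,$$
which forces $X\beta=0$ and then $\|\beta\|=0$, hence $\beta=0$.

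Finally I would check the projection property using the variational characterization of projection onto a closed convex set: $\hat u$ is the projection of $y$ onto $A_\emptyset$ iff $\hat u\in A_\emptyset$ and $(y-\hat u)'(v-\hat u)\le 0$ for all $v\in A_\emptyset$. Substituting $y-\hat u=X\hat\beta$ reduces the inequality to $\hat\beta'X'v\le\hat\beta'X'\hat u$ for every $v\in A_\emptyset$. The right-hand side equals $\|\hat\beta\|$ by property (ii) applied to the subgradient $X'\hat u\in\dnorm(\hat\beta)$, and the left-hand side is bounded by $\|X'v\|^*\|\hat\beta\|\le\|\hat\beta\|$ via the defining inequality of the dual norm together with $v\in A_\emptyset$. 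I expect the only mild obstacle is being careful that the subgradient identity $s'\hat\beta=\|\hat\beta\|$ holds even when $\hat\beta=0$ (it does, trivially), so that the argument handles the null-model case uniformly.
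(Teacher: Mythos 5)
Your proof is correct and follows essentially the same route as the paper: all three claims are derived from the optimality characterization in Proposition~\ref{prop:ch-pen} together with the subgradient facts $\dnorm(0)=B^*$ and $s'\hat\beta=\|\hat\beta\|$ for $s\in\dnorm(\hat\beta)$, and your verification of the variational inequality $(y-\hat u)'(v-\hat u)\le 0$ is the paper's argument verbatim. The only (harmless) deviation is in showing $\Spen(u)=\{0\}$: you compare objective values at $\beta$ and $0$ directly, whereas the paper invokes the fact that all minimizers share the same norm; both are valid.
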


\section{Conclusion and perspective} In Theorem \ref{thm:uni-pen}, we
provide a necessary and sufficient condition for the uniqueness of
penalized least-squares estimators whose penalty term is given by a
norm with a polytope-shaped unit ball.  To the best of our knowledge,
this kind of uniqueness has not been treated in this generality
before, only a necessary and sufficient condition in the special case
of the LASSO has been available. Our condition involves a new
geometric approach that allows to also investigate other properties of
these types of methods. A central role in this approach is played by
the unit ball of the norm that is dual to the penalizing term, denoted
by $B^*$. For the LASSO, it is fairly straightforward to see that
every possible signed model corresponds to a face of $B^*$, the unit
cube in this case. For SLOPE, we show that $B^*$ is, in fact, given by
the so-called signed permutahedron. We also show the highly
non-trivial fact that there is a one-to-one correspondence between the
faces of this signed permutahedron and the so-called SLOPE patterns,
which contain the information about zero components, signs, clusters,
and the ordering in a SLOPE solution. Our findings illustrate the
intrinsic connection between the faces of the geometric object $B^*$
and the type of patterns the corresponding penalized method can
uncover. This suggests to further explore this link generally in
penalized estimation, which could, for example, be accomplished by the
implicit definition that patterns are equivalence classes of elements
sharing the same subdifferential with respect to the penalty term.
Another natural direction for extending the uniqueness result (and
also the results for pattern selection) would be to consider even more
general penalties to also encompass methods such as the generalized
LASSO and related procedures which are currently not covered in our
setting.

%%%%%%%%%%%%%%%%%%%%%%%%%%%%%%%%%%%%%%%%%%%%%%%%%%%%%%%%%%%%%%%%%%%%%%%%%%%%
%%%%%%%%%%%%%%%%%%%%%%%%%%%%%%%%%%%%%%%%%%%%%%%%%%%%%%%%%%%%%%%%%%%%%%%%%%%%
\section{Acknowledgments} \label{sec:acknowledgements}
%%%%%%%%%%%%%%%%%%%%%%%%%%%%%%%%%%%%%%%%%%%%%%%%%%%%%%%%%%%%%%%%%%%%%%%%%%%%
%%%%%%%%%%%%%%%%%%%%%%%%%%%%%%%%%%%%%%%%%%%%%%%%%%%%%%%%%%%%%%%%%%%%%%%%%%%%

We would like to thank Jan Mielniczuk and \'Swiatos\l{}aw Gal for
their insightful comments on the paper. Patrick Tardivel's
affiliation, the Institute of Mathematics in Burgundy (IMB), receives
support from the EIPHI Graduate School (contract ANR-17-EURE-0002).

\appendix

%%%%%%%%%%%%%%%%%%%%%%%%%%%%%%%%%%%%%%%%%%%%%%%%%%%%%%%%%%%%%%%%%%%%%%%%%%%%
%%%%%%%%%%%%%%%%%%%%%%%%%%%%%%%%%%%%%%%%%%%%%%%%%%%%%%%%%%%%%%%%%%%%%%%%%%%%
\section{Appendix -- Accessible sign vectors for LASSO and BP} \label{app:LASSO_BP}
%%%%%%%%%%%%%%%%%%%%%%%%%%%%%%%%%%%%%%%%%%%%%%%%%%%%%%%%%%%%%%%%%%%%%%%%%%%
%%%%%%%%%%%%%%%%%%%%%%%%%%%%%%%%%%%%%%%%%%%%%%%%%%%%%%%%%%%%%%%%%%%%%%%%%%%

We start by introducing the notion of accessible sign vectors for
LASSO and BP problems.

\begin{definition}[Accessible sign vectors for LASSO and BP] 
Let $X \in \R^{n \times p}$, $\sigma \in \{-1,0,1\}^p$, and $\lambda >
0$. We say that $\sigma$ is an accessible sign vector for LASSO (or
BP) with respect to $X$, if there exists $y \in \R^n$ and $\hat\beta \in \SlassoL(y)$ (or
there exists $y \in \col(X)$ and $\hat\beta \in \Sbp(y)$,
respectively), such that $\sign(\hat\beta) = \sigma$.
\end{definition}

The following theorem provides a geometric characterization of
accessible sign vectors for LASSO and BP based on faces of the unit
cube $[-1,1]^p$ and the vector space $\row(X)$. First, note that
sub-differential calculus of the $\ell_1$-norm at $\sigma \in
\{-1,0,1\}^p$ gives
$$
\dnormx{1}(\sigma) = E_1 \times \dots \times E_p \text{ \rm with } 
E_j = \begin{cases} \{\sigma_j\} & |\sigma_j| = 1 \\
[-1,1] & \sigma_j = 0,
\end{cases}
$$
where $\dnormx{1}(x)$ denotes the subdifferential of the $\ell_1$-norm at
$x \in \R^p$, see Appendices~\ref{app:subdiffs} and
\ref{app:subdiff-norm} for more details. Therefore, the mapping
$\sigma \mapsto \dnormx{1}(\sigma)$ is a bijection between sign
vectors in $\{-1,0,1\}^p$ and faces of the unit cube in $\R^p$. We let
$F_1(\sigma) = \dnormx{1}(\sigma)$ in the following. For completeness,
Theorem~\ref{thm:LASSO_BP-acc} also contains an analytic
characterization of accessibility.

\begin{theorem}[Characterization of accessible LASSO and BP sign vectors] 
\label{thm:LASSO_BP-acc} Let $X \in \R^{n \times p}$ and $\lambda >
0$.

\begin{enumerate}

\item Geometric characterization: A sign vector $\sigma \in
\{-1,0,1\}^p$ is accessible for LASSO or BP with
respect to $X$ if and only if $\row(X)$ intersects the face
$F_1(\sigma)$.

\item Analytic characterization: A sign vector $\sigma \in
\{-1,0,1\}^p$ is accessible for LASSO or BP with respect to $X$ if and
only if the implication
$$
Xb = X\sigma \implies \|b\|_1 \geq \|\sigma\|_1
$$
holds.
\end{enumerate}
\end{theorem}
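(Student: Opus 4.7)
The plan is to derive both characterizations simultaneously from the KKT optimality conditions for LASSO and BP, using the fact already stated in the theorem that $\dnormx{1}(\hat\beta)$ equals the face $F_1(\sigma)$ of the unit cube whenever $\sign(\hat\beta) = \sigma$.

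First, I would recall the first-order optimality conditions. For LASSO, $\hat\beta \in \SlassoL(y)$ is equivalent to $X'(y - X\hat\beta) \in \lambda\,\dnormx{1}(\hat\beta)$; for BP, $\hat\beta \in \Sbp(y)$ is equivalent to $X\hat\beta = y$ together with the existence of $v \in \R^n$ such that $X'v \in \dnormx{1}(\hat\beta)$. In either case, if $\sign(\hat\beta) = \sigma$ then an element of $\row(X)$ lies in $F_1(\sigma)$, giving the necessity of the geometric condition. For sufficiency, I pick $s \in F_1(\sigma) \cap \row(X)$ and write $s = X'v$. Taking $\hat\beta = \sigma$: for BP, I choose $y = X\sigma \in \col(X)$ so that $X\hat\beta = y$ and $X'v = s \in F_1(\sigma) = \dnormx{1}(\hat\beta)$; for LASSO, I choose $y = X\sigma + \lambda v$ so that $X'(y - X\hat\beta) = \lambda X'v = \lambda s \in \lambda\,\dnormx{1}(\hat\beta)$. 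In either case, $\sigma$ is realized as the sign of an element of the corresponding solution set, establishing statement~1.

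For statement~2, I observe that the implication $Xb = X\sigma \Rightarrow \|b\|_1 \geq \|\sigma\|_1$ is by definition the statement that $\sigma$ is itself a minimizer of $\|.\|_1$ on the affine set $\{b : Xb = X\sigma\}$, i.e.\ $\sigma \in \Sbp(X\sigma)$. By the BP optimality condition applied to the feasible point $\sigma$, this is equivalent to the existence of $v$ with $X'v \in \dnormx{1}(\sigma) = F_1(\sigma)$, which is exactly $\row(X) \cap F_1(\sigma) \neq \emptyset$. Chaining this with statement~1 yields the analytic characterization and, as a by-product, shows that the sets of LASSO-accessible and BP-accessible sign vectors coincide.

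The main obstacle I anticipate is purely bookkeeping: being careful that $\dnormx{1}(x)$ depends only on $\sign(x)$ and equals the designated face $F_1(\sign(x))$, so that the canonical choice $\hat\beta = \sigma$ genuinely realizes the sign pattern $\sigma$ in the converse constructions (in particular, at coordinates where $\sigma_j = 0$ the value $\hat\beta_j = 0$ is consistent with $\sign(\hat\beta_j) = 0$). Since this correspondence is already laid out in the theorem statement and in the appendix on subdifferentials, no new machinery is needed; the argument is a direct application of first-order convex optimality.
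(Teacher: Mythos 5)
Your proposal is correct and follows essentially the same route as the paper: necessity via the KKT/optimality conditions for LASSO and BP, sufficiency via the canonical candidate $\hat\beta = \sigma$ with $y = X\sigma + \lambda v$ (LASSO) and $y = X\sigma$ (BP), and the analytic characterization obtained by recognizing the implication as $\sigma \in \Sbp(X\sigma)$. The only cosmetic difference is that the paper routes the last step through its general Lemma~\ref{lem:analytic} (valid for any norm, proved with the subdifferential of the indicator of the affine set), whereas you specialize directly to the $l_1$-case via the BP optimality condition; the content is the same.
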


The analytic characterization for accessible sign vectors is, in fact,
closely related to the identifiability condition given in
\cite{TardivelBogdan22}, in which the inequality above is replaced
by a strict inequality. In high-dimensional linear regression, this
condition is necessary and sufficient for sign recovery of thresholded
LASSO and thresholded BP \citep{TardivelBogdan22}, as well as for
so-called thresholded justice pursuit \citep{DesclouxEtAl22}, a
method closely related to BP. We point out that the analytic
characterization allows to check accessibility of a particular sign
vector simply by solving a BP problem, which in turn gives insight on
whether the corresponding face of the unit cube is intersected by
$\row(X)$. In practice, one does not even need an accurate numerical
solver to check whether a sign vector $\sigma \in \{-1,0,1\}^p$ is
accessible, when the BP problem is uniquely solvable: if we are given
an approximate minimizer $\tilde\beta$ for the BP problem with $y =
X\sigma$ that satisfies $\|\tilde\beta - \hat \beta\|_\infty < 1/2$,
where $\hat\beta$ is the exact minimizer, it suffices to check whether
$\sign(\round(\tilde\beta)) = \sigma$, where $\round(.)$ rounds
componentwise to the closest integer. In that case, $\sigma$ is
accessible, whereas $\sigma$ is not accessible if
$\sign(\round(\tilde\beta)) \neq \sigma$, as outlined in
Corollary~\ref{cor:BP-acc} in Appendix~\ref{app:proof-LASSO_BP-acc}.
This approach to check accessibility was used in
\cite{TardivelBogdan22} to derive the so-called identifiability
curve.

Note that Theorem~\ref{thm:LASSO_BP-acc} reveals that whether a sign
vector is accessible for LASSO does not depend on the value of the
tuning parameter $\lambda$. We also point out that
Theorems~\ref{thm:uni-pen} and \ref{thm:LASSO_BP-acc} allow to deduce
that the number of non-null components of the LASSO is always less
than or equal to $\rk(X)$ when the solutions are unique. Indeed, if
the LASSO minimizer is unique, according to Theorem~\ref{thm:uni-pen},
$\row(X)$ does not intersect a face of $[-1,1]^p$ associated to a sign
vector having more than $\rk(X)$ non-null components, i.e., a face
whose codimension is larger than $\rk(X)$. This implies that only sign
vectors with at most $\rk(X)$ components different to zero are
accessible. For the LASSO, this is a refined version of the well-known
fact that, in case the estimator is unique, at most $n$ components can
be non-zero \citep[see e.g.][]{Tibshirani13,OsborneEtAl00}. 

\section{Appendix -- Proofs} \label{app:proofs}
%%%%%%%%%%%%%%%%%%%%%%%%%%%%%%%%%%%%%%%%%%%%%%%%%%%%%%%%%%%%%%%%%%%%%%%%%%%%
%%%%%%%%%%%%%%%%%%%%%%%%%%%%%%%%%%%%%%%%%%%%%%%%%%%%%%%%%%%%%%%%%%%%%%%%%%%%

In the appendix, we additionally make use of the following notation.
Let $A$ be a matrix. We use the symbol $A_j$ to denote the $j$-the
column of $A$. For an index set $I$, $A_I$ is the matrix containing
columns with indices in $I$ only. For a vector $x$, $\supp(x)$
contains the indices of the non-zero components of $x$. The symbol
$|x|_{(j)}$ denotes the $j$-th order statistic of the absolute values
of the components of $x$, i.e., $|x|_{(1)} \geq |x|_{(2)} \geq \dots$.
Let $l,k \in \N$ with $l \leq k$, then $[l:k]$ denotes the set
$\{l,l+1,\dots,k\}$. We let $\1_m$ stand for the vector $(1,\dots,1)'
\in \R^m$. All inequalities involving vectors are understood
componentwise.

%%%%%%%%%%%%%%%%%%%%%%%%%%%%%%%%%%%%%%%%%%%%%%%%%%%%%%%%%%%%%%%%%%%%%%%%%%%%
\subsection{Facts about subdifferentials} \label{app:subdiffs}
%%%%%%%%%%%%%%%%%%%%%%%%%%%%%%%%%%%%%%%%%%%%%%%%%%%%%%%%%%%%%%%%%%%%%%%%%%%%

We remind the reader of some definitions and facts on subgradients and
subdifferentials. The following can, for instance, be found in
\cite{HiriartLemarechal93}. For a function $f: \R^p \to \R$, a vector
$s \in \R^p$ is a \emph{subgradient of $f$ at $x \in \R^p$} if
\begin{equation} \label{eq:subdiff-def}
f(z) \geq f(x) + s'(z-x) \;\; \forall z \in \R^p.
\end{equation}
The set of all subgradients of $f$ at $x$, which is a convex set, is
called the \emph{subdifferential of $f$ at $x$}, denoted by
$\partial_f(x)$. It is straightforward to characterize the minimizer
of a function in the following way
\begin{equation} \label{eq:subdiff-min}
x^* \in \Argmin f \iff 0 \in \partial_f(x^*).
\end{equation}

While convexity of $f$ is not necessary for the above statement, the
use of subdifferentials is an especially important tool when this is
the case. Given that $f$ is convex, subdifferentiability is also a
local property in the sense that for any $\delta > 0$, we have
\begin{equation} \label{eq:subdiff-local}
s \in \partial_f(x) \iff f(x+h) \geq f(x) + s'h \text{ for all } h : \|h\|_\infty \leq \delta.
\end{equation}

%%%%%%%%%%%%%%%%%%%%%%%%%%%%%%%%%%%%%%%%%%%%%%%%%%%%%%%%%%%%%%%%%%%%%%%%%%%%
\subsection{Facts about polytopes} \label{app:polytopes}
%%%%%%%%%%%%%%%%%%%%%%%%%%%%%%%%%%%%%%%%%%%%%%%%%%%%%%%%%%%%%%%%%%%%%%%%%%%%

We report some basic definitions and facts on polytopes, which we will
use throughout the article and, in particular, in the proofs in
subsequent sections. The following can, for instance, be found in the
excellent textbooks by \cite{Gruber07} and \cite{Ziegler12}.

%\citet[pp.~27--31,51--64]{Ziegler12} and
%\citet[pp.~243--256]{Gruber07}.

A set $P_\mV \subseteq \R^p$ is called a $\mV$-polytope, if it is the
convex hull of a finite set of points in $\R^p$, namely,
$$
P_\mV = \conv(V_1,\dots,V_k) = \conv(V)
$$
for $V = (V_1 \dots V_k) \in \R^{p \times k}$. A set $P_\mH \subseteq
\R^p$ is called an $\mH$-polyhedron, if it is the intersection of a
finite number of half-spaces, namely,
$$
P_\mH = \bigcap_{l=1}^m \{x \in \R^p : A_l'x \leq b_l\} = \{x \in \R^p : A'x \leq b\},
$$
for some $A = (A_1 \dots A_m) \in \R^{p \times m}$ and $b \in \R^m$. A
bounded $\mH$-polyhedron is called $\mH$-polytope. A set $P \subseteq
\R^p$ is an $\mH$-polytope if and only if it is a $\mV$-polytope. We
therefore simply use the term \emph{polytope} in the following. The
\emph{dimension} $\dim(P)$ of a polytope is given by the dimension of
$\aff(P)$, the affine subspace spanned by $P$, and its codimension by
$\codim(P) = p - \dim(P)$.  A \emph{face} $F$ of $P$ is any subset $F
\subseteq P$ that satisfies
$$
F = \{x \in P : a'x = b_0\}, \text{ where } P \subseteq \{x : a'x \leq b_0\},
$$
for some $a \in \R^p$ and $b_0 \in \R$. Such an inequality $a'x \leq
b_0$ is called a \emph{valid inequality} of $P$. Note that $F =
\emptyset$ and $F = P$ are faces of $P$ and that any face $F$ is again
a polytope. A face $F \neq P$ is called \emph{proper}. A face of
dimension $0$ is called \emph{vertex}, and we denote the set of all
vertices of $P$ by $\vert(P)$. This set satisfies $\vert(P) \subseteq
\{V_1,\dots,V_k\}$, where $P = \conv(V_1,\dots,V_k)$. A point $x_0 \in
P$ lies in $\relint(P)$, the \emph{relative interior} of $P$, if $x_0$
is not contained in a proper face of $P$. Finally, the \emph{(polar)
dual} of $P$ is defined as
$$
P^* = \{s \in \R^p : s'x \leq 1 \, \forall x \in P\},
$$
which is again a polytope. We now list a number of useful facts about
polytopes involving the above definitions, which are used throughout
the article. These properties can either be found explicitly or as a
straightforward consequence of properties listed in the above
mentioned references.

\begin{proposition} \label{prop:polytope-facts}

Let $P \in \R^p$ be a polytope given by $P = \conv(V)$, where $V =
(V_1,\dots,V_k) \in \R^{p \times k}$, and denote by $P^*$ the dual of
$P$. For simplicity, we assume that $\vert(P) = \{V_1,\dots,V_k\}$.
Moreover, let $0 \in P$. The following properties hold.

\begin{enumerate}

\item If $F$ and $\tilde F$ are faces of $P$, then so is $F \cap
\tilde F$.

\item For any face $F$ of $P$, $F = \conv(\vert(P) \cap F)$.

%\item We have $x_0 \in \relint(P)$ if and only if any valid inequality
%$a'x \leq b_0$ of $P$ with $a'x_0 = b_0$ implies that $a'x = b$ for
%all $x \in P$.

\item Let $D$ be an affine line contained in the affine span of $P$.
If $D \cap \relint(P) \neq \emptyset$ then $D$ intersects a proper
face of $P$.

\item We can write $P^* = \{s \in \R^p : V's \leq \1_k\}$.

\item Any face $F^*$ of $P^*$ can be written as $F^* = \{s \in P^* :
V_I's = \1_{|I|}\}$ for some $I \subseteq [k]$.

\item Let $I \subseteq [k]$. $F = \conv(V_I)$ is a face of $P$ $\iff$
$F^* = \{s \in P^* : V_I's = \1_{|I|}\}$ is a face of $P^*$, where $I$
is the maximal index set in this representation.

In this case, $F^*$ is the dual of $F$ (and vice versa), and $\codim(F^*)
= \rk(V_I)$.
%, and the affine span of $F^*$ is given by $x^* +
%\col(V_I)^\perp$, where $x^* \in F^*$.

\end{enumerate}

\end{proposition}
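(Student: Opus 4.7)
The plan is to verify each item by combining standard $\mV/\mH$-duality with polar-face correspondence; items~1--5 reduce to direct checks, while item~6 is the main substantive step. For item~1, if the valid inequalities $a'x\le b_0$ and $\tilde a'x\le\tilde b_0$ define $F$ and $\tilde F$ respectively, then the sum $(a+\tilde a)'x\le b_0+\tilde b_0$ is a valid inequality on $P$ whose equality set is exactly $F\cap\tilde F$, so the intersection is itself a face. Item~2 is the standard fact that every polytope equals the convex hull of its vertices, applied to $F$, combined with the observation $\vert(F)\subseteq \vert(P)\cap F$ (any vertex of $F$ is extremal in $P$, since a representation as a nontrivial convex combination in $P$ would restrict, via the defining inequality of $F$, to a representation in $F$). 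For item~3, the line $D$ is unbounded while $P$ is bounded, so $D\not\subseteq P$; parametrizing $D$ by a real variable and starting from a point of $D\cap\relint(P)$, continuity furnishes a first exit point in the relative boundary of $P$, and every point of the relative boundary lies in a proper face by the face decomposition.

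For items~4 and~5, I would argue as follows. The $\mV$-representation gives, by linearity over convex combinations, $s\in P^*\iff s'V_j\le 1$ for all $j\in[k]$, which is precisely the $\mH$-representation $P^*=\{s:V's\le\1_k\}$. A general fact about $\mH$-polyhedra is that every face is cut out by promoting a subset of the defining inequalities to equalities, so any face of $P^*$ has the form $F^*=\{s\in P^*:V_I's=\1_{|I|}\}$ for some $I\subseteq[k]$, establishing item~5.

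The main obstacle is item~6, the polar face correspondence. Given $F^*=\{s\in P^*:V_I's=\1_{|I|}\}$ with $I$ chosen maximal, I would verify that $F=\conv(V_I)$ is the conjugate face of $P$: each $V_i$ with $i\in I$ attains equality $s'V_i=1$ on all of $F^*$ by construction, while maximality of $I$ ensures no other vertex of $P$ does, so the $V_i$ for $i\in I$ are precisely the vertices of $P$ attaining the maximum $s'x=1$ uniformly over $F^*$. This exhibits $F^*$ as the polar dual of $F$; the reverse direction is obtained by the same construction applied to $P^{**}$, using $P^{**}=P$ (valid under the standing condition $0\in P$, together with passing to $\aff(P)$ if $0$ is not interior) to recover the bijection. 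For the codimension formula, $\aff(F^*)$ lies in the affine subspace defined by the $|I|$ equations $V_j's=1$, whose codimension in $\R^p$ equals $\rk(V_I)$; maximality of $I$ forces $F^*$ to span this whole affine flat (any strictly smaller flat would add another equality $V_j's=1$ satisfied on $F^*$, contradicting maximality), giving $\codim(F^*)=\rk(V_I)$. The delicate point throughout is the maximality bookkeeping that pins down the $I$ used in the representation; once that is handled, the rest is assembly from the references cited.
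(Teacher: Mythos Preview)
The paper does not prove this proposition at all: it is stated as a collection of standard polytope facts and attributed to the textbooks of Gruber and Ziegler, with the remark that the properties ``can either be found explicitly or as a straightforward consequence of properties listed in the above mentioned references.'' Your proposal therefore goes well beyond what the paper does, supplying direct arguments where the paper simply defers.

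Your sketches for items~1--5 are standard and correct. For item~6 your outline is also on the right track, but the codimension step deserves a bit more care. You argue that if $\aff(F^*)$ were strictly smaller than the flat $\{s:V_I's=\1_{|I|}\}$, then some additional equality $V_j's=1$ with $j\notin I$ would hold on $F^*$, contradicting maximality of $I$. This inference relies on the fact that for an $\mH$-polytope the affine hull of a face is exactly the intersection of the supporting hyperplanes of the constraints that are tight on that face, which in turn requires that $P^*$ be full-dimensional (otherwise there are implicit equalities not coming from any $V_j$). Under the bare hypothesis $0\in P$ this can fail; you gesture at the fix (``passing to $\aff(P)$ if $0$ is not interior''), and in the paper's actual applications $P$ is the unit ball of a norm so $0\in\mathrm{int}(P)$ and both $P$ and $P^*$ are full-dimensional, making the issue moot. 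With that caveat noted, your argument is sound and more explicit than anything in the paper itself.
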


%%%%%%%%%%%%%%%%%%%%%%%%%%%%%%%%%%%%%%%%%%%%%%%%%%%%%%%%%%%%%%%%%%%%%%%%%%%%
\subsection{Facts about subdifferentials of norms with polytope unit
balls} \label{app:subdiff-norm}
%%%%%%%%%%%%%%%%%%%%%%%%%%%%%%%%%%%%%%%%%%%%%%%%%%%%%%%%%%%%%%%%%%%%%%%%%%%%

We now consider subdifferentials of norms and list several properties
in the following. In particular, we show in
Proposition~\ref{prop:subdiff-norm} that the subdifferential of a norm
evaluated at zero is simply given by the unit ball of the
corresponding dual norm, a fact that will be used throughout
subsequent proofs. Proposition~\ref{prop:subdiff-face} then shows that
all faces of this dual norm unit ball can be represented by a
subdifferential of the original norm, provided that this norm is such
that its unit ball, and therefore also the unit ball of its dual norm,
are given by a polytope. Lemma~\ref{lem:subdiff-face-tech} contains a
technical result needed for the proof of Theorem~\ref{thm:uni-pen}.

A version of the following proposition -- which holds independently of
the shape of the unit ball of the norm under consideration -- can also
be found in \cite{HiriartLemarechal93}.

\begin{proposition} \label{prop:subdiff-norm}
Let $\|.\|$ be a norm on $\R^p$, and let $\|.\|^*$ denote the dual
norm. Then the following holds.

\begin{enumerate}

\item The subdifferential of $\|.\|$ at $0$ is given by
$$
\dnorm(0) = \{s \in \R^p : \|s\|^* \leq 1\}.
$$

\item In general, the subdifferential of $\|.\|$ at $x$ is given by
$$
\dnorm(x) = \{s \in \R^p : \|s\|^* \leq 1, s'x = \|x\|\}.
$$

\end{enumerate}

\end{proposition}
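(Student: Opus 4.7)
The plan is to work directly from the definition of the subdifferential together with the definition of the dual norm. The key identity is that $\|s\|^* \le 1$ is equivalent to the inequality $s'z \le \|z\|$ holding for all $z \in \R^p$. This follows because by positive homogeneity of the norm, $\sup_{z\neq 0} s'z/\|z\| = \sup_{\|z\|\le 1} s'z = \|s\|^*$.

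For part~(1), I would unwind the definition: $s \in \dnorm(0)$ means $\|z\| \ge \|0\| + s'(z-0) = s'z$ for every $z \in \R^p$. By the equivalence above, this is exactly $\|s\|^* \le 1$, giving the claimed identification of $\dnorm(0)$ with the unit ball of the dual norm.

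For part~(2), I would prove both inclusions. For the forward direction, suppose $s \in \dnorm(x)$. Plugging $z = 0$ into the subgradient inequality gives $0 \ge \|x\| - s'x$, i.e., $s'x \ge \|x\|$. Plugging $z = 2x$ gives $2\|x\| \ge \|x\| + s'x$, i.e., $s'x \le \|x\|$. Combining, $s'x = \|x\|$. Then for arbitrary $z \in \R^p$, the subgradient inequality becomes $\|z\| \ge \|x\| + s'z - s'x = s'z$, so again by the equivalence above, $\|s\|^* \le 1$. For the reverse direction, if $\|s\|^* \le 1$ and $s'x = \|x\|$, then for any $z$, $\|x\| + s'(z-x) = s'z \le \|z\|$, so $s \in \dnorm(x)$.

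There is no real obstacle here: both parts reduce to juggling the definitions of subgradient and dual norm, with the only nontrivial step being the two clever substitutions $z=0$ and $z=2x$ in part~(2) that pin down the equality $s'x = \|x\|$. The polytope hypothesis is not required for this proposition, which is why the statement is presented in full generality.
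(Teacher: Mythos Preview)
Your proof is correct and follows essentially the same route as the paper's: the same substitutions $z=0$ and $z=2x$ to force $s'x=\|x\|$, and the same one-line verification of the reverse inclusion. The only cosmetic difference is that the paper observes part~(1) is the special case $x=0$ of part~(2) and proves only the latter, whereas you treat part~(1) separately.
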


\begin{proof}
%{\emph{1})} According to the definition of a subdifferential in
%(\ref{eq:subdiff-def}), we have
%\begin{align*}
%\dnorm(0) & = \{s \in \R^p : \|v\| \geq s'v \; \forall v \in \R^p\} = 
%\{s \in \R^p : v's \leq 1 \; \forall v \in \R^p \text{ with } \|v\|\le 1\} \\
%& = \{s \in \R^p : \|v\|^* \leq 1\}.
%\end{align*}

It suffices to show {\emph{2})}. By definition, we have
$$
\dnorm(x) = \{s \in \R^p: \|v\| \geq \|x\| + s'(v-x) \; \forall v \in \R^p\}
$$
Take $s \in \dnorm(x)$. When $v = 0$, we get $s'x \geq \|x\|$. When $v
= 2x$, we may deduce that $s'x \leq \|x\|$, implying that $s'x =
\|x\|$ must hold. This also implies $\|v\| \geq s'v$ for all $v \in
\R^p$, so that $s \in B^*$, yielding
$$
\dnorm(x) \subseteq \{s \in B^*: s'x = \|x\|\}.
$$
To see that also the converse is true, take any $s \in B^*$ satisfying
$s'x = \|x\|$. Now, take any $v \in \R^p$. Clearly $\|v\| \geq s'v =
\|x\| + s'(v-x)$, implying that
$$
\{s \in B^*: s'x = \|x\|\} \subseteq \dnorm(x).
$$
\end{proof}

\begin{proposition} \label{prop:subdiff-face}
Let $\|.\|$ be a norm whose unit ball $B$ is the polytope $\conv(V)$
for some $V = (V_1 \dots V_k) \in \R^{p \times k}$. Let $F \subseteq
B^*$, where $B^*$ is the dual norm unit ball, with $F \neq \emptyset$.
Then
$$
F \text{ is a face of } B^* \iff F = \dnorm(x) \text{ for some } x \in \R^p.
$$
\end{proposition}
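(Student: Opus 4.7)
The plan is to base everything on Proposition~\ref{prop:subdiff-norm}, which identifies $\dnorm(x) = \{s \in B^* : s'x = \|x\|\}$, combined with the description of the faces of $B^*$ given in Proposition~\ref{prop:polytope-facts}. The overarching observation is that, by biduality of norms, $\|x\| = \sup_{s \in B^*} s'x$, so that the linear inequality $s'x \leq \|x\|$ is a valid inequality on $B^*$ and $\dnorm(x)$ is precisely the face it defines.

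For the direction ``$\Leftarrow$'', I would simply combine this observation with the standard fact that $\dnorm(x)$ is never empty (since $\|.\|$ is a finite convex function on $\R^p$), concluding that $\dnorm(x)$ is a non-empty face of $B^*$ directly from the definition of a face.

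For ``$\Rightarrow$'', let $F$ be a non-empty face of $B^*$. Using Proposition~\ref{prop:polytope-facts}.5 I would write $F = \{s \in B^* : V_i's = 1 \text{ for all } i \in I\}$ for some $I \subseteq [k]$. If $I = \emptyset$, then $F = B^* = \dnorm(0)$ and we are done. Otherwise, set $x = \frac{1}{|I|}\sum_{i \in I} V_i \in \conv(V_I) \subseteq B$, so that $\|x\| \leq 1$. Taking any $s \in F$ yields $s'x = 1 \leq \|s\|^* \|x\| \leq \|x\|$, forcing $\|x\| = 1$. To show $F = \dnorm(x)$ I would then verify both inclusions: $F \subseteq \dnorm(x)$ is immediate from $s'x = 1 = \|x\|$ for $s \in F$; for $\dnorm(x) \subseteq F$, any $s \in \dnorm(x)$ satisfies $1 = s'x = \frac{1}{|I|}\sum_{i\in I} s'V_i$ with $s'V_i \leq 1$ for each $i$ (since $V_i \in B$ and $s \in B^*$), so strict positivity of the weights forces $s'V_i = 1$ for every $i \in I$, i.e.\ $s \in F$.

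I expect the main subtlety to be handling the degenerate case $I = \emptyset$ (equivalently $F = B^*$) separately from the generic case, together with the implicit reliance on Proposition~\ref{prop:polytope-facts}.5, which converts tightness of an arbitrary valid inequality on $B^*$ into tightness at the distinguished vertices $V_i$ of $B$. Once these two points are in place, the rest is a routine duality computation between $B$ and $B^*$.
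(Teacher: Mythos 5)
Your proof is correct and follows essentially the same route as the paper's: both directions rest on the identity $\dnorm(x)=\{s\in B^*: s'x=\|x\|\}$ from Proposition~\ref{prop:subdiff-norm}, the representation $F=\{s\in B^*: V_I's=\1_{|I|}\}$, and the tightness argument forcing $s'V_i=1$ for all $i\in I$. The only (harmless) deviation is that you establish $\|x\|=1$ via the duality inequality $s'x\leq\|s\|^*\|x\|$ together with $F\neq\emptyset$, whereas the paper takes $x=\sum_{l\in I}V_l$ and reads off $\|x\|=|I|$ from the fact that $\conv(V_I)$ is a face of $B$ (Proposition~\ref{prop:polytope-facts}).
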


\begin{proof}
($\implies$) If $F = B^*$, then $x = 0$ by
Proposition~\ref{prop:subdiff-norm}. If $F$ is a proper face, we can
write $F = \{s \in B^* : V_I's = \1_{|I|}\}$ for some $I \subseteq
[k]$, where $I$ is the maximal set satisfying this. Let $x = \sum_{l
\in I} V_l$. Since $x/|I| \in \conv(V_I)$, a proper and non-empty face
of $B$, we have $\|x\| = |I|$. Note that for $s \in B^*$, we have
$s'V_l \leq 1$, so that
$$
s \in \dnorm(x) \iff s'x = \sum_{l \in I} V_l's = \|x\| = |I|
\iff V_l's = 1 \;\; \forall l \in I \iff s \in F.
$$ 
($\impliedby$) If $F = \dnorm(x)$, then $F = \{s \in B^* : s'x =
\|x\|\}$ by Proposition~\ref{prop:subdiff-norm}. Since $(x/\|x\|)'s
\leq 1$ clearly is a valid inequality for all $s \in B^*$, $F$ is a
face of $B^*$.
\end{proof}

\begin{lemma} \label{lem:subdiff-face-tech}
Let $\|.\|$ be a norm whose unit ball $B$ is the polytope $\conv(V)$
for some $V = (V_1 \dots V_k) \in \R^{p \times k}$. Let $F = \{s \in
B^* : V_I's = \1_{|I|}\}$ be a face of $B^*$, the dual norm unit ball,
and let $I$ be the maximal set satisfying this. Then the following
holds.
$$
F \subseteq \dnorm(b) \implies b \in \col(V_I).
$$

\end{lemma}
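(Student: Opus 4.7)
The plan is to translate the hypothesis $F \subseteq \dnorm(b)$ into an orthogonality statement via Proposition~\ref{prop:subdiff-norm}, and then match dimensions using Proposition~\ref{prop:polytope-facts}(6), where the maximality of $I$ enters in an essential way.

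First I would apply Proposition~\ref{prop:subdiff-norm}(2) to rewrite $\dnorm(b) = \{s \in B^* : s'b = \|b\|\}$. So the assumption $F \subseteq \dnorm(b)$ means $s'b = \|b\|$ for every $s \in F$, which gives $b'(s_1 - s_2) = 0$ for all $s_1, s_2 \in F$. Letting $L$ denote the linear span of $\{s_1 - s_2 : s_1, s_2 \in F\}$, that is, the direction space of $\aff(F)$, this translates to $b \in L^\perp$.

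Second I would identify $L$ explicitly. Every $s \in F$ satisfies $V_I' s = \1_{|I|}$, so all differences of points of $F$ lie in $\ker(V_I') = \col(V_I)^\perp$, giving $L \subseteq \col(V_I)^\perp$. The crucial point is that equality holds: because $I$ is the \emph{maximal} index set in the given representation of $F$, Proposition~\ref{prop:polytope-facts}(6) applies and yields $\codim(F) = \rk(V_I)$, so $\dim L = \dim \aff(F) = p - \rk(V_I) = \dim \col(V_I)^\perp$. Two nested subspaces of the same dimension must coincide, hence $L = \col(V_I)^\perp$, and then $b \in L^\perp = \col(V_I)$, as desired.

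The main obstacle I expect is not any calculation but pinning down precisely where maximality is used. Without it, one could enlarge $I$ by indices $j$ whose active constraint $V_j's = 1$ on $F$ is already implied by the others; then $\rk(V_I)$ could strictly exceed $\codim(F)$, the inclusion $L \subseteq \col(V_I)^\perp$ would be strict, and $b \in L^\perp$ would be strictly weaker than $b \in \col(V_I)$. Maximality is exactly the hypothesis that makes the dimension count tight and delivers the conclusion.
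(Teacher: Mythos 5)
Your proof is correct, but it takes a genuinely different route from the paper's. You work entirely on the dual side: you translate $F \subseteq \dnorm(b)$ into $b \perp L$, where $L$ is the direction space of $\aff(F)$, and then close the argument with the dimension count $\dim L = p - \codim(F) = p - \rk(V_I) = \dim\col(V_I)^\perp$, invoking Proposition~\ref{prop:polytope-facts}(6) for the middle equality (this is exactly where maximality of $I$ enters for you). The paper instead works on the primal side: it writes $b/\|b\| \in B = \conv(V)$ as $b = \sum_l \alpha_l V_l$ with $\alpha_l \geq 0$ and $\sum_l \alpha_l = \|b\|$, and a complementary-slackness computation ($\|b\| = s'b = \sum_{l \in \supp(\alpha)} \alpha_l s'V_l \leq \sum_l \alpha_l = \|b\|$) forces $s'V_l = 1$ for all $l \in \supp(\alpha)$ and all $s \in F$, whence maximality of $I$ gives $\supp(\alpha) \subseteq I$ directly. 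The paper's argument is self-contained and more elementary, whereas yours outsources the key duality fact $\codim(F) = \rk(V_I)$ to the unproven Proposition~\ref{prop:polytope-facts}(6) (which is also stated under the simplifying assumption that the columns of $V$ are exactly the vertices); in exchange, your argument is shorter and makes the geometric role of maximality transparent. One small slip in your closing discussion: for \emph{any} index set $I_0$ representing $F$ one has $F \subseteq \{s : V_{I_0}'s = \1_{|I_0|}\}$ and hence $\rk(V_{I_0}) \leq \codim(F)$ always --- the rank can never \emph{exceed} the codimension. The actual danger with a non-maximal $I_0$ is the opposite one, namely $\rk(V_{I_0}) < \codim(F)$, which would make $\col(V_{I_0})^\perp$ strictly larger than $L$ and the desired conclusion $b \in \col(V_{I_0})$ strictly stronger than what $b \in L^\perp$ delivers (and indeed possibly false). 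This does not affect the validity of your proof, which only uses the equality for the maximal $I$.
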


\begin{proof}
Since $b/\|b\| \in B = \conv(V)$, we can write $b = \sum_{l = 1}^k
\alpha_l V_l$ with $\alpha_l \geq 0$ and
$\sum_{l=1}^{k}\alpha_l=\|b\|$. Since $\dnorm(b) = \{s \in B^*: s'b =
\|b\|\}$ and $s'V_l \leq 1$, we have for $A = \supp(\alpha)$ and any
$s \in \dnorm(b)$
$$
\|b\| = s'b = \sum_{l \in A} \alpha_l s'V_l \leq \sum_{l \in A}
\alpha_l = \|b\|.
$$
This implies that $s'V_l = 1$ for all $l \in \supp(\alpha)$, which,
since $F \subseteq \dnorm(b)$, yields $\supp(\alpha) \subseteq I$.
\end{proof}

%%%%%%%%%%%%%%%%%%%%%%%%%%%%%%%%%%%%%%%%%%%%%%%%%%%%%%%%%%%%%%%%%%%%%%%%%%%%
\subsection{Proofs of Theorems \ref{thm:uni-pen} and \ref{thm:uni-BP}} 
\label{app:proof-1and2}
%%%%%%%%%%%%%%%%%%%%%%%%%%%%%%%%%%%%%%%%%%%%%%%%%%%%%%%%%%%%%%%%%%%%%%%%%%%%

The proofs of Theorems~\ref{thm:uni-pen} and \ref{thm:uni-BP} follow a
similar outline, with the proof of Theorem~\ref{thm:uni-BP} being more
accessible. We therefore start with the latter one.

%%%%%%%%%%%%%%%%%%%%%%%%%%%%%%%%%%%%%%%%%%%%%%%%%%%%%%%%%%%%%%%%%%%%%%%%%%%%
\subsubsection{Characterization of BP minimizers and proof of
Theorem~\ref{thm:uni-BP}}
%%%%%%%%%%%%%%%%%%%%%%%%%%%%%%%%%%%%%%%%%%%%%%%%%%%%%%%%%%%%%%%%%%%%%%%%%%%%

The following characterization of BP minimizers will prove useful in
the following. It can be found in \cite{ZhangEtAl15} and
\cite{Gilbert17}, as well as in general form in \cite{MousaviShen19}.

Let $y \in {\rm col}(X)$ and let $\hat\beta$ satisfy $X\hat\beta = y$
then, $\hat\beta \in \Sbp(y)$ if and only if

\begin{equation} \label{eq:chBP}
\exists z \in \R^n \text{ such that }
\begin{cases} \|X'z\|_\infty \leq 1,\\
X_j'z= \sign(\hat\beta_j) \;\; \forall j \in \supp(\hat\beta).
\end{cases}
\end{equation}

\begin{proof}[Proof of Theorem~\ref{thm:uni-BP}] \leavevmode

($\impliedby$) Let us assume that $\row(X)$ intersects a face $F$ of
$[-1,1]^p$ whose codimension is larger than $\rk(X)$. We show that one
can find some $y \in \col(X)$ for which $\Sbp(y)$ is not a singleton.

The face $F$ can be written as $F = E_1 \times \dots \times E_p$,
where $E_j \in \{\{-1\},\{1\},[-1,1]\}$ for $j \in [p]$. Now, let $J =
\{j \in [p] : |E_j|=1\}$, the set of indices of sets $E_j$ that are
singletons. We have $\codim(F) = |J|$ and, by assumption, $|J| >
\rk(X)$. Now define $\hat\beta \in \R^p$ by setting
$$
\hat\beta_j = 
\begin{cases}
1 & E_j = \{1\} \\
-1 & E_j = \{-1\} \\
0 & j \notin J .
\end{cases}
$$
Clearly, $\supp(\hat\beta) = J$. Set $y = X\hat\beta$. Since $\row(X)$
intersects $F$, there exists $z \in \R^n$ such that $X'z \in F$. This
implies that $\|X'z\|_\infty \leq 1$ and $X_j'z = \hat\beta_j =
\sign(\hat\beta_j)$ for any $j \in \supp(\hat\beta) = J$. Therefore,
by ($\ref{eq:chBP}$), $\hat\beta \in \Sbp(y)$.

To show that $\hat\beta$ is not a unique minimizer, we provide
$\tilde\beta \in \R^p$ with $\tilde\beta \neq \hat\beta$,
$X\tilde\beta = y$ and $\|\tilde\beta\|_1 = \|\hat\beta\|_1$. Since
$|J| > \rk(X)$, the columns of $X_J$ are linearly dependent, so that
we can pick $h \in \ker(X)$, $h \neq 0$ such that $\supp(h) \subseteq
J$ and $\|h\|_\infty < 1$. Since $\|h\|_\infty < 1$, $\sign(\hat\beta
+ h) = \sign(\hat\beta)=\hat\beta$. Let $\tilde\beta = \hat\beta + h$.
Note that $X\tilde\beta = X\hat\beta = y$ and that
\begin{align*}
\|\tilde\beta\|_1 & = \sum_{j=1}^p \sign(\hat\beta_j + h_j)(\hat\beta_j + h_j) = 
\sum_{j=1}^p \sign(\hat\beta_j)\hat\beta_j + \sum_{j \in J} \hat\beta_j h_j 
= \|\hat\beta\|_1 + \sum_{j \in J} (X'z)_j h_j \\ 
& = \|\hat\beta\|_1 + z'Xh = \|\hat\beta\|_1,
\end{align*}
implying that $\tilde\beta \in \Sbp(y)$ also.

%\medskip

($\implies$) We assume that $\hat\beta, \tilde\beta \in \Sbp(y)$ with
$\hat\beta \neq \tilde\beta$ for some $y \in \col(X)$. We need to show
that there exists a face $F$ of $[-1,1]^p$ with $F \cap \row(X) \neq
\emptyset$ and $\codim(F) >  \rk(X)$. Consider $F = E_1 \times \dots
\times E_p$ and $\tilde F = \tilde E_1 \times \dots \times \tilde E_p$
with
$$
E_j = \begin{cases} \{\sign(\hat\beta_j)\} & \text{ if } j \in \supp(\hat\beta) \\
[-1,1] & \text{ if } j \notin \supp(\hat\beta) \end{cases} 
\;\; \text{ and } \;\;
\tilde E_j = \begin{cases} \{\sign(\tilde\beta_j)\} & \text{ if } j \in \supp(\tilde\beta) \\
[-1,1] & \text{ if } j \notin \supp(\tilde\beta). \end{cases} 
$$
Note that for any two minimizers $\hat\beta$ and $\tilde\beta$, we
have $\hat\beta_j\tilde\beta_j \geq 0$ for all $j \in [p]$, since
otherwise $\check\beta = (\hat\beta + \tilde\beta)/2$ satisfies
$X\check\beta = X\hat\beta = X\tilde\beta$ as well as
$\|\check\beta\|_1 < \|\hat\beta\|_1 = \|\tilde\beta\|_1$, which would
lead to a contradiction. We therefore have $\supp(\check\beta) =
\supp(\hat\beta) \cup \supp(\tilde\beta)$. Note that by a convexity
argument, $\check\beta \in \Sbp(y)$ also, so that by (\ref{eq:chBP}),
there exists $\check z \in \R^n$ with $\|X'\check z\|_\infty \leq 1$
and $X_j'\check z = \sign(\check\beta_j)$ for all $j \in
\supp(\check\beta)$. Moreover, $X'\check z \in F \cap \tilde F$ holds.
Now, let $F_0$ be a face of the face $F \cap \tilde F$ of smallest
dimension that still intersects $\row(X)$. We write $F_0 = E_{0,1}
\times \dots \times E_{0,p}$ and let $J_0 = \{j \in [p] : |E_{0,j}| =
1\}$. Note that $\row(X)$ must intersect $F_0$ in its relative
interior $\relint(F_0)$ where
$$
\relint(F_0) = \relint(E_{0,1}) \times \dots \times \relint(E_{0,p}) \;\; \text{ where } 
\relint(E_{0,j}) = \begin{cases} E_{0,j} & j \in J_0 \\ (-1,1) & j \notin J_0, \end{cases}
$$
since otherwise $\row(X)$ intersects a proper face of $F_0$, which
contradicts the assumption that $F_0$ is of minimal dimension. We now
need to show that $\codim(F_0) = |J_0| > \rk(X)$. Assume that $|J_0|
\leq \rk(X)$. The columns of $X_{J_0}$ are linearly dependent since
$X_{J_0}\hat\beta_{J_0} = X\hat\beta = X\tilde\beta = X_{J_0}\tilde
\beta_{J_0}$ with $\hat\beta_{J_0} \neq \tilde\beta_{J_0}$, since both
$\supp(\hat\beta)$ and $\supp(\tilde\beta)$ are subsets of
$\supp(\check \beta) \subseteq J_0$. We therefore have
$$
\dim(\col(X_{J_0})) < |J_0| \leq \rk(X) =  \dim(\col(X))  \;\; \text{ and } \;\;
\col(X)^\perp \subsetneqq \col(X_{J_0})^\perp.
$$
This implies that we can pick $u \in \col(X_{J_0})^\perp \setminus
\col(X)^\perp$ so that $X_{J_0}'u = 0$, but $X'u \neq 0$. Pick $z_0
\in \R^n$ with $X'z_0 \in \relint(F_0)$. The affine line $\{X'(z_0 +
tu) : t \in \R\} \subseteq \row(X)$ intersects the relative interior
$\relint(F_0)$ and is included in the affine span of $F_0$ by
construction of $u$. Therefore, by
Proposition~\ref{prop:polytope-facts}, $\row(X)$ intersects a proper
face of $F_0$, yielding a contradiction.
%\todo[inline,color=magenta]{Since $X'z_0\in F_0$ then the smallest
%affine space containing $F_0$ is $X'z_0+V$ where $V$ is a vectorial
%space. Now, by construction of $F_0$ we have $V=V_1\times \dots \times
%V_p$ where $V_i=\R$ when $i\notin J_0$ or $V_i=\{0\}$ when $i\in J_0$.
%Therefore, it is true that `` $\{X'(z_0 + tu) : t \in \R\} \subseteq
%\row(X)$ intersects the relative interior $\relint(F_0)$ and is
%included in the affine span of $F_0$ by construction of $u$''}
\end{proof}

%%%%%%%%%%%%%%%%%%%%%%%%%%%%%%%%%%%%%%%%%%%%%%%%%%%%%%%%%%%%%%%%%%%%%%%%%%%%
\subsubsection{Characterization of penalized minimizers and proof of
Theorem~\ref{thm:uni-pen}}
%%%%%%%%%%%%%%%%%%%%%%%%%%%%%%%%%%%%%%%%%%%%%%%%%%%%%%%%%%%%%%%%%%%%%%%%%%%%

In the particular and well-studied case in which the norm of the
penalized problem is the $\ell_1$-norm, the solutions to the
corresponding optimization problem can be characterized by the
Karush-Kuhn-Tucker (KKT) conditions for the LASSO, which can be
summarized as follows, see for instance, \cite{BuehlmannVdGeer11}.
\begin{eqnarray}
\hat\beta \in \SlassoL(y) & \iff &  \label{eq:ch-lasso}
\|X'(y-X\hat\beta)\|_\infty \leq \lambda \text{ and } X_j'(y - X\hat\beta) =
\lambda\sign(\hat\beta_j) \; \forall j \in \supp(\hat\beta) \\ \nonumber &
\iff & \|X'(y - X\hat\beta)\|_\infty \leq \lambda \text{ and } \hat\beta'X'(y
- X\hat\beta) = \lambda\|\hat\beta\|_1
\end{eqnarray}
In the above, the supremum norm is the dual to the $\ell_1$-norm. We can
generalize the above characterization for solutions to the penalized
problem from (\ref{eq:pen}) in the following proposition. Note that in
our notation, the tuning parameter $\lambda$ is part of the norm
$\|.\|$.

\begin{proposition} \label{prop:ch-pen}
Let $X \in \R^{n \times p}$, $y \in \R^n$. We have $\hat\beta \in
\Spen(y)$ if and only if
$$
\|X'(y - X\hat\beta)\|^* \leq 1 \text{ and } \hat\beta'X'(y - X\hat\beta) 
= \|\hat\beta\|.
$$
\end{proposition}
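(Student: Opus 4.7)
The plan is to characterize minimizers of the convex objective $f(b) = \tfrac{1}{2}\|y - Xb\|_2^2 + \|b\|$ via its subdifferential, since by (\ref{eq:subdiff-min}) we have $\hat\beta \in \Spen(y)$ if and only if $0 \in \partial_f(\hat\beta)$. The strategy is to compute $\partial_f(\hat\beta)$ by splitting it into the contribution of the smooth quadratic part and the contribution of the norm, and then translate the resulting inclusion into the two stated scalar conditions using Proposition~\ref{prop:subdiff-norm}.

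First I would note that the quadratic part $b \mapsto \tfrac{1}{2}\|y - Xb\|_2^2$ is continuously differentiable on $\R^p$ with gradient $-X'(y - Xb)$. Since both summands are convex and the quadratic part is differentiable, the Moreau--Rockafellar sum rule applies without qualification issues and yields
$$
\partial_f(\hat\beta) = -X'(y - X\hat\beta) + \dnorm(\hat\beta).
$$
Hence $0 \in \partial_f(\hat\beta)$ is equivalent to $X'(y - X\hat\beta) \in \dnorm(\hat\beta)$.

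Next, I would invoke part 2 of Proposition~\ref{prop:subdiff-norm}, which states that
$$
\dnorm(\hat\beta) = \{s \in \R^p : \|s\|^* \leq 1,\ s'\hat\beta = \|\hat\beta\|\}.
$$
Plugging $s = X'(y - X\hat\beta)$ into this description immediately gives the two conditions $\|X'(y - X\hat\beta)\|^* \leq 1$ and $\hat\beta'X'(y - X\hat\beta) = \|\hat\beta\|$, proving both directions of the equivalence simultaneously.

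There is no serious obstacle here: the only thing to be careful about is that the sum rule for subdifferentials applies cleanly because the quadratic term is smooth (so no relative interior condition is needed), and that Proposition~\ref{prop:subdiff-norm} supplies exactly the right two-condition description of $\dnorm(\hat\beta)$. The result does not require the unit ball of $\|.\|$ to be a polytope, which is why the proposition is stated for an arbitrary norm and can be reused in the proof of Proposition~\ref{prop:convexnullset}.
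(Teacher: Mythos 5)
Your proof is correct and follows essentially the same route as the paper's: both characterize minimizers via $0 \in \partial_f(\hat\beta)$, reduce this to $X'(y - X\hat\beta) \in \dnorm(\hat\beta)$, and conclude with part 2 of Proposition~\ref{prop:subdiff-norm}. You merely spell out the sum rule and the smoothness of the quadratic part, which the paper leaves implicit.
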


\begin{proof}
Using subdifferential calculus, the proof a straightforward
consequence of (\ref{eq:subdiff-min}) and
Proposition~\ref{prop:subdiff-norm}.
\begin{align*}
\hat\beta \in \Spen(y) 
& \iff 0 \in X'(X\hat\beta - y) + \dnorm(\hat\beta) 
\iff X'(y - X\hat\beta) \in \dnorm(\hat\beta) \\ 
& \iff \|X'(y - X\hat\beta)\|^* \leq 1
\text{ and } \hat\beta'X'(y - X\hat\beta) = \|\hat\beta\|.
\end{align*}
\end{proof}

Before finally showing Theorem~\ref{thm:uni-pen}, the following lemma
states that the fitted values are unique over all solutions of the
penalized problem for a given $y$. It is a generalization of Lemma~1
in \cite{Tibshirani13}, who proves this fact for the special case of
the LASSO.

\begin{lemma} \label{lem:fitted-values}
Let $X \in \R^{n \times p}$, $y \in \R^n$. Then $X\hat\beta =
X\tilde\beta$ for all $\hat\beta, \tilde\beta \in \Spen(y)$.
\end{lemma}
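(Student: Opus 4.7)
The plan is to exploit strict convexity of the squared $\ell_2$ distance to show the fitted values must agree. Fix any two minimizers $\hat\beta, \tilde\beta \in \Spen(y)$ and let $v^* = \frac{1}{2}\|y-X\hat\beta\|_2^2 + \|\hat\beta\| = \frac{1}{2}\|y-X\tilde\beta\|_2^2 + \|\tilde\beta\|$ be the common minimum value. Consider the midpoint $\check\beta = (\hat\beta + \tilde\beta)/2$. Since the objective is convex and $\check\beta$ cannot achieve a smaller value than $v^*$, we must have $\frac{1}{2}\|y - X\check\beta\|_2^2 + \|\check\beta\| = v^*$.

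Next, split the analysis into the two summands. By convexity of the norm, $\|\check\beta\| \leq \frac{1}{2}(\|\hat\beta\| + \|\tilde\beta\|)$. By convexity of $z \mapsto \frac{1}{2}\|y-z\|_2^2$, we likewise have $\frac{1}{2}\|y - X\check\beta\|_2^2 \leq \frac{1}{4}\|y - X\hat\beta\|_2^2 + \frac{1}{4}\|y - X\tilde\beta\|_2^2$. Adding these two inequalities gives $v^*$ on both sides, so both inequalities must hold with equality. The key point is that $z \mapsto \frac{1}{2}\|y-z\|_2^2$ is \emph{strictly} convex, so equality in the second inequality forces $X\hat\beta = X\tilde\beta$, which is what we want.

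The argument is essentially just strict convexity of the quadratic loss along any direction in $\col(X)$, plus the fact that the norm contribution is well-behaved under midpoints. There is no real obstacle here; the only subtlety is being careful to use convexity (not strict convexity) for the norm and keeping the two halves of the objective separate so that the strict inequality from the quadratic can be isolated. No particular structure of the norm (polytope unit ball or otherwise) is needed, which is why this lemma can be invoked freely in earlier parts of the paper.
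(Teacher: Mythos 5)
Your proof is correct and follows essentially the same route as the paper's: both arguments consider the midpoint $(\hat\beta+\tilde\beta)/2$ and use strict convexity of the quadratic loss in the fitted values together with (plain) convexity of the norm; the paper merely phrases it as a contradiction while you phrase it as forcing equality in the convexity inequalities.
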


\begin{proof}
Assume that $X\hat\beta \neq X\tilde\beta$ for some $\hat\beta,
\tilde\beta \in \Spen(y)$ and let $\check\beta = (\hat\beta +
\tilde\beta)/2$. Because the function $\mu \in \R^n \mapsto
\|y-\mu\|_2^2$ is strictly convex, one may deduce that
$$
\|y - X\check\beta\|_2^2 < 
\frac{1}{2}\|y - X\hat\beta\|_2^2 + \frac{1}{2}\|y - X\tilde{\beta}\|_2^2.
$$ 
Consequently,
$$
\frac{1}{2}\|y - X\check\beta\|_2^2 + \|\check\beta\| <
\frac{1}{2}\left(\frac{1}{2}\|y - X\beta\|_2^2 + \|\beta\| +
\frac{1}{2}\|y - X\tilde\beta\|_2^2 + \|\tilde\beta\|\right),
$$
which contradicts both $\beta$ and $\tilde \beta$ being minimizers.
\end{proof}

\begin{proof}[Proof of Theorem~\ref{thm:uni-pen}] \leavevmode

Throughout the proof, let $B = \conv(V)$ with $V = (V_1 \dots V_k) \in
\R^{p \times k}$.

%\medskip

($\impliedby$) Assume that there exists a face $F$ of $B^*$ that
intersects $\row(X)$ (so that $F$ is non-empty) and satisfies
$\codim(F) > \rk(X)$ (so that $F$ is proper). This implies that there
exists $I \subseteq [k]$ such that
$$
F  = \{s \in B^* : V_I's = \1_{|I|}\},
$$
where $I$ is the maximal index set satisfying this relationship.
Moreover, this implies that  $\conv(V_I)$ is a proper, non-empty face
of $B$ and that we have $\|s\|^* = 1$ for all $s \in F$ and $\|v\|=1$
for all $v \in \conv(V_I)$. We show that non-unique solutions exist.
Define $\hat\beta = \sum_{l \in I} V_l$ and observe that $\|\hat
\beta\| = |I| \| \sum_{l \in I} V_l/|I| \| = |I|$. Pick $z \in \R^n$
with $X'z \in F$, which exists by assumption, and set $y = X\hat\beta
+ z$. Then $\hat\beta \in \Spen(y)$ by Proposition~\ref{prop:ch-pen},
since
$$
\|X'(y - X\hat\beta)\|^* = \|X'z\|^* = 1 \;\; \text{ and
} \;\; \hat\beta'(X'(y-X\hat\beta)) = \hat\beta'X'z = 
\sum_{l \in I} V_l'X'z = |I| = \|\hat\beta\|.
$$
We now construct $\tilde\beta \in \Spen(y)$ with $\tilde\beta \neq
\hat\beta$. Since $\codim(F_I) = \dim(\col(V_I)) > \rk(X)$, we can
pick $h \in \col(V_I) \cap \ker(X)$ with $h \neq 0$. Scale $h$ such
that for $h = \sum_{l \in I} c_l V_l$, we have $\max_{l \in I} |c_l| <
1$, and define $\tilde\beta = \hat \beta + h \neq \hat\beta$. Clearly,
we have $X\tilde\beta = X\hat\beta$. Note that $1+c_l > 0$ and let
$\gamma = \sum_{l \in I} (1+c_l) > 0$. We also have
$$
\|\tilde\beta\| = \gamma \, \left\|\sum_{l \in I} \frac{1 + c_l}{\gamma} V_l\right\| 
= \gamma = \sum_{l \in I}(1 + c_l) 
= |I| + \sum_{l \in I} c_l (X'z)'V_l 
= |I| + (X'z)'h = |I| = \|\hat\beta\|,
$$
proving that $\tilde\beta \in \Spen(y)$ also.

%\medskip

($\implies$) Let us assume that there exists $y \in \R^n$ and
$\hat\beta, \tilde\beta \in \Spen(y)$ with $\hat\beta \neq
\tilde\beta$. We then have
$$
X'(y-X\hat\beta) \in \dnorm(\hat\beta) \;\; \text{ and } \;\; 
X'(y-X\tilde\beta) \in \dnorm(\tilde{\beta}).
$$
Because $X\hat\beta = X\tilde\beta$ by Lemma~\ref{lem:fitted-values},
one may deduce that $\row(X)$ intersects the face $\dnorm(\hat\beta)
\cap \dnorm(\tilde\beta)$. Now, let $F^*$ be a face of $\dnorm(\hat\beta)
\cap \dnorm(\tilde\beta)$ of smallest dimension that intersects
$\row(X)$ and write
$$
F^* = \{s \in B^* : V_I's = \1_{|I|}\},
$$
where $I$ is the largest index set $I \subseteq [k]$ satisfying this
relationship. If $\codim(F^*) = \dim(\col(V_I)) \leq \rk(X)$, consider
the following. Note that we can pick $u \in \R^n$ for which $X'u \neq
0$ and $X'u \in \col(V_I)^\perp$. For this, let $I_0 \subseteq I$ be
such that the columns of $V_{I_0}$ are linearly independent, and
$\col(V_{I_0}) = \col(V_I)$. By Lemma~\ref{lem:subdiff-face-tech}, we
have $\hat\beta, \tilde\beta \in \col(V_{I_0})$, so that we get
$$
XV_{I_0}\gamma = X\hat\beta = X\tilde\beta = XV_{I_0}\tilde\gamma
$$
with $\gamma \neq \tilde\gamma$, implying that the columns of
$XV_{I_0}$ are linearly dependent. But this means that
$$
\rk(XV_I) = \dim(\col(XV_I)) = \dim(\col(XV_{I_0})) < |I_0| = 
\dim(\col(V_{I_0})) =  \dim(\col(V_I)) \leq \rk(X).
$$
Therefore, $\col(XV_I) \subsetneqq \col(X)$ and, consequently,
$\col(X)^\perp \subsetneqq \col(XV_I)^\perp$, so that we can pick $u
\in \col(XV_I)^\perp\setminus\col(X)^\perp$ for which $X'u \neq 0$ and
$X'u \in \col(V_I)^\perp$. Also note that $X'z \in F^*$ for some $z
\in \R^n$ and that $X'z$ lies in the relative interior $\relint(F^*)$,
as otherwise, $\row(X)$ would intersect a face of $\dnorm(\hat\beta)
\cap \dnorm(\tilde\beta)$ of smaller dimension. The affine line
$\{X'(z + tu) : t\in \R\} \subseteq \row(X)$ intersects $\relint(F^*)$
and is included in the affine span of $F^*$ by construction.
Therefore, by Proposition~\ref{prop:polytope-facts}, $\row(X)$
intersects a proper face of $F^*$, yielding a contradiction.
\end{proof}

%%%%%%%%%%%%%%%%%%%%%%%%%%%%%%%%%%%%%%%%%%%%%%%%%%%%%%%%%%%%%%%%%%%%%%%%%%%%
\subsection{Proof of Proposition~\ref{prop:cond-weak}}
%%%%%%%%%%%%%%%%%%%%%%%%%%%%%%%%%%%%%%%%%%%%%%%%%%%%%%%%%%%%%%%%%%%%%%%%%%%%

We turn to proving Proposition~\ref{prop:cond-weak}. Note that a set
is negligible with respect to the Lebesgue measure on $\R^{n \times
p}$ if and only if it is negligible with respect to the standard
Gaussian measure on $\R^{n \times p}$. Therefore, to establish
Proposition~\ref{prop:cond-weak}, it suffices to prove the equality
\begin{equation} \label{eq:weak-inequality}
\P_Z \left(\exists y \in \R^n, |\SpenZ(y)| > 1 \right)= 0, 
\text{ \rm where $Z\in \R^{n \times p}$ has 
iid $\mN(0,1)$ entries.}
\end{equation}

Note that $\rk(Z) = \min\{n,p\}$ almost surely. Therefore, when $n
\geq p$, $\ker(Z) = 0$ almost surely and $\SpenZ(y)$ is a singleton
almost surely. We use the following lemma to establish
(\ref{eq:weak-inequality}), where $\N$ stands for the (positive)
natural numbers.

\begin{lemma} \label{lem:almost_surely}
Let $n \in \N$, $q \geq n+1$, and $v \in \R^q$ where $v \neq 0$ is a
fixed vector. If $Z = (Z_1,\dots,Z_n) \in \R^{q \times n}$ has iid
$\mN(0,1)$ entries, then $\P_Z(v \in \col(Z)) = 0$.
\end{lemma}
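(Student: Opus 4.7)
My plan is to proceed by induction on $n$, leveraging the basic measure-theoretic fact that any proper linear subspace $W \subsetneq \R^q$ has Lebesgue measure zero and therefore zero mass under an absolutely continuous distribution, in particular under $\mN(0,\I_q)$.

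The base case $n=1$ is immediate: $v \in \col(Z)$ reduces to $Z_1 \in \R v$, which is a one-dimensional linear subspace of $\R^q$, proper since $q \geq 2$, and hence of Gaussian measure zero.

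For the inductive step, I would assume the conclusion for $n-1$ iid Gaussian columns (which is legitimate because $q \geq n+1$ implies $q \geq n = (n-1)+1$). Condition on $Z_1,\dots,Z_{n-1}$, set $V_{n-1} = \text{span}(Z_1,\dots,Z_{n-1})$, and split the event $\{v \in \col(Z)\}$ according to whether $v \in V_{n-1}$ or not. On one hand, $\P(v \in V_{n-1}) = 0$ by the inductive hypothesis. On the other hand, if $v = \sum_{i=1}^n \alpha_i Z_i$ with $v \notin V_{n-1}$, then $\alpha_n$ must be nonzero, which forces
$$Z_n \;=\; \alpha_n^{-1}\Bigl(v - \sum_{i<n} \alpha_i Z_i\Bigr) \;\in\; V_{n-1} + \R v.$$
Conditionally on $Z_1,\dots,Z_{n-1}$, the set $V_{n-1}+\R v$ is a linear subspace of $\R^q$ of dimension at most $n$, hence a proper subspace since $q \geq n+1$. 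Because $Z_n$ is an independent standard Gaussian in $\R^q$, the conditional probability $\P(Z_n \in V_{n-1}+\R v \mid Z_1,\dots,Z_{n-1})$ is zero almost surely, and integrating yields unconditional probability zero.

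The only point requiring care is the dimension bookkeeping that keeps $V_{n-1}+\R v$ a proper subspace of $\R^q$; this is precisely where the hypothesis $q \geq n+1$ enters. Beyond that, there is no real obstacle -- the argument is essentially a Fubini/slicing argument packaged as induction.
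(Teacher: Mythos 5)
Your proof is correct, and it takes a genuinely different route from the paper's. You argue by induction on the number of columns, splitting on whether $v$ already lies in $\mathrm{span}(Z_1,\dots,Z_{n-1})$, and in the remaining case observing that $Z_n$ would have to fall into the fixed (conditionally on $Z_1,\dots,Z_{n-1}$) proper subspace $V_{n-1}+\R v$ of dimension at most $n<q$, which an independent nondegenerate Gaussian does almost surely not do; a Fubini step finishes the argument. The paper instead first reduces to the square-ish case $q=n+1$ by selecting $n+1$ rows with $v_I\neq 0$, and there rewrites the event as the vanishing of $\det(Z_1,\dots,Z_n,v)$; it then invokes rotational invariance to replace the fixed direction $v/\|v\|_2$ by an independent Gaussian direction $\zeta/\|\zeta\|_2$, so that the determinant becomes that of $n+1$ iid Gaussians, which is almost surely nonzero. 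Your approach is more elementary and self-contained: it avoids the determinant computation, the distributional symmetry trick, and the row-selection reduction, at the cost of a (routine) conditioning/measurability check for the event $\{Z_n\in V_{n-1}+\R v\}$. Both arguments use the hypothesis $q\geq n+1$ in the same essential way, namely to guarantee that the relevant subspace is proper. Your inductive hypothesis is legitimately applicable since $q\geq n+1$ implies $q\geq (n-1)+1$, as you note.
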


\begin{proof}
We first prove the result for $q = n+1$. If $v \in \col(Z)$ then
$$
\det(Z_1,\dots,Z_n,v) = 0 \iff \det(Z_1/\|Z_1\|_2,\dots,Z_n/\|Z_n\|_2,v/\|v\|_2) = 0.
$$
Now, because the columns $Z_1/\|Z_1\|_2,\dots,Z_n/\|Z_n\|_2$ follow a
uniform distribution on the $\ell_2$-unit sphere, we can deduce that the
distribution of the random variable
$\det(Z_1/\|Z_1\|_2,\dots,$ $Z_n/\|Z_n\|_2, v/\|v\|_2)$ is equal to the
distribution of
$\det(Z_1/\|Z_1\|_2,\dots,Z_n/\|Z_n\|_2,\zeta/\|\zeta\|_2)$. Here,
$\zeta$ follows a $\N(0, \I_{n+1})$ distribution, independent from
$Z_1,\dots,Z_n$ as conditioning on $\zeta = v$ does not change the
distribution. Finally, the random variable
$$
\det(Z_1/\|Z_1\|_2,\dots,Z_n/\|Z_n\|_2,\zeta/\|\zeta\|_2) =
\frac{1}{\|Z_1\|_2 \times \dots \times \|Z_n\|_2\times \|\zeta\|_2}
\det(Z_1,\dots,Z_{n},\zeta)
$$
is non-zero almost surely. This implies $\P_Z(v \in \col(Z)) = 0$.
When $q > n+1$, let $I \subseteq [q]$ with $|I| = n + 1$ and $v_I \neq
0$. Consequently, $v_I \in \col(\tilde Z)$, where $\tilde Z \in
\R^{(n+1)\times n}$ is obtained by keeping the rows of $Z$ with
indices in $I$. Therefore, $P_Z(v \in \col(Z)) \leq P_{\tilde Z}(v_I
\in \col(\tilde Z)) = 0$, which concludes the proof.
\end{proof}

\begin{proof}[Proof of Proposition \ref{prop:cond-weak}] 
If $p \leq n$, we are done. If $p > n$, let $F_0$ be a proper face of
$B^*$ such that $\codim(F_0) = q > n$. Note that $0 \notin \aff(F_0)$,
the affine space spanned by $F_0$. There exists $A \in \R^{q \times
p}$ with orthonormal rows and $v \in \R^q$, $v \neq 0$ such that
$\aff(F_0) = \{x \in \R^p: Ax=v\}$. Since $AA' = \I_p$, $AZ' \in
\R^{q\times n}$ has iid $\mN(0,1)$ entries. Thus, by
Lemma~\ref{lem:almost_surely}, we have
\begin{equation} \label{eq:negligible}
\P_Z \left(\row(Z) \cap F_0 \neq \emptyset \right) \leq 
\P_Z(\row(Z) \cap \aff(F_0) \neq \emptyset) = \P_Z(v \in \col(AZ')) = 0.  
\end{equation}
According to Theorem~\ref{thm:uni-pen} and since $\rk(Z) = n$ almost
surely, the following equalities hold.
\begin{eqnarray*}
\P_Z\left(\exists y \in \R^n, |\SpenZ(y)| > 1 \right) &=&
\P_Z\left(\bigcup_{\underset{\codim(F) > \rk(Z)}{F \in \mF(P)}}\{\row(Z) \cap F \neq \emptyset\}\right)\\
&=& \P_Z\left(\bigcup_{\underset{\codim(F) > n}{F \in \mF(P)}}\{\row(Z) \cap F\neq \emptyset\}\right) 
= 0.
\end{eqnarray*}
The last equality is a consequence of (\ref{eq:negligible}).
\end{proof}

%%%%%%%%%%%%%%%%%%%%%%%%%%%%%%%%%%%%%%%%%%%%%%%%%%%%%%%%%%%%%%%%%%%%%%%%%%%%
\subsection{Proof of Theorem~\ref{thm:SLOPE-pattern}} \label{app:proof-SLOPE}
%%%%%%%%%%%%%%%%%%%%%%%%%%%%%%%%%%%%%%%%%%%%%%%%%%%%%%%%%%%%%%%%%%%%%%%%%%%%

Theorem~\ref{thm:SLOPE-pattern} states that there is a bijection between
the SLOPE patterns and the faces of the signed permutahedron. The basis
for proving this is the fact that the signed permutahedron is the dual
of the sorted-$\ell_1$-norm unit ball, and that any face of it is given by a
subdifferential of the sorted-$\ell_1$-norm by
Proposition~\ref{prop:subdiff-face}.

We start by proving the following proposition which shows that the
subdifferential of the sorted-$\ell_1$-norm at zero is, indeed, the
signed permutahedron, and also characterizes the subdifferential of the
sorted-$\ell_1$-norm for certain values of $x$.

\begin{proposition}  
\label{prop:subdiff-SLOPE} The subdifferential $F_w(x) =
\dnormx{w}(x)$ of the sorted-$\ell_1$-norm exhibits the following properties.
\begin{enumerate}

\item We have 
$F_w(0) = P_w^\pm$.

\item For any $x \in \R^p$ with $x_1 = \dots = x_p > 0$, we have
$F_w(x) = P_w$.

\item For any $x \in \R^p$ with $x_1\ge \dots\ge x_k > x_{k+1}\ge
\dots\ge x_p \ge 0$, we have
$$
F_w(x) = F_{w_{[k]}}(x_{[k]}) \times F_{w_{[k+1:p]}}(x_{[k+1:p]}).
$$

\item Let $0 < k_1 < \dots < k_l < p$ be an arbitrary subdivision of
$[0:p]$, then for any $x \in \R^p$ with $x_1 = \dots = x_{k_1} >
x_{k_1 + 1} = \dots = x_{k_2} > \dots > x_{k_l+1} = \dots = x_p \geq 0$,
we have $\codim\left(F_w(\patt(x))\right) =
\|\patt(x)\|_\infty$ and
$$
F_w(x) = F_w(\patt(x)) =
\begin{cases} 
P_{w_{[k_1]}} \times \dots \times P_{w_{[k_{l-1}+1:k_l]}} \times P_{w_{[k_l+1:p]}} & 
\text{if } x_p > 0 \\
P_{w_{[k_1]}} \times \dots \times P_{w_{[k_{l-1}+1:k_l]}} \times P^\pm_{w_{[k_l+1:p]}} &
\text{if } x_p = 0.
\end{cases}
$$

\end{enumerate}

\end{proposition}

\begin{proof} \emph{1)}
By Proposition~\ref{prop:subdiff-norm}, we may show that $P_w^\pm = B^*$.

%\medskip

($\subseteq$) Take any vertex $W = (\sigma_1 w_{\pi(1)},\dots,\sigma_p
w_{\pi(p)})'$ of $P_w^\pm$ and any $x \in \R^p$ with $\|x\|_w \leq 1$.
We have 
$$
W'x = \sum_{j=1}^p \sigma_j w_{\pi(j)} x_j \leq \sum_{j=1}^p |x_j|
w_{\pi(j)} \leq \sum_{j=1}^p w_j |x|_{(j)} = \|x\|_w \leq 1
$$
and therefore $W \in B^*$. By convexity, $P_w^\pm \subseteq B^*$
follows.

%\medskip

($\supseteq$) Let $a'x \leq b_0$ for some $a \in \R^p$ and $b_0 \in
\R$ be a valid inequality of $P_w^\pm$. We show that this is a valid
inequality of $B^*$ also: Let $W$ be the vertex of $P_w^\pm$ defined
by $W_j = \sign(a_j)w_{\pi^{-1}(j)}$, where the permutation $\pi$
satisfies $|a_{\pi(1)}| \geq \dots \geq |a_{\pi(p)}|$. For any $s \in
B^*$, we have
$$
a's \leq \|a\|_w = \sum_{j=1}^p |a_{\pi(j)}| w_j = 
\sum_{j=1}^p \sign(a_j) a_j w_{\pi^{-1}(j)} = a'W \leq b_0.
$$
Since $P_w^\pm$ can be written as the (finite) intersection of
half-spaces, $P_w^\pm \supseteq B^*$ follows.

%\medskip

\emph{2)} According to Proposition~\ref{prop:subdiff-norm} and 1), we have
$$
F_w(x) = \left\{s \in P_w^\pm : \sum_{j=1}^p s_j = \sum_{j=1}^p w_j\right\}.
$$ 
A vertex $W = (\sigma_1 w_{\pi(1)},\dots,\sigma_p w_{\pi(p)})'$ of
$P_w^\pm$ with $\sigma \in \{-1,1\}^p$ and $\pi \in \mS_p$ then
fulfills $W \in F_w(x)$ if and only if $\sigma_1 = \dots =
\sigma_p=1$. Convexity then yields $F_w(x) = P_w$.

%\medskip

\emph{3)} ($\subseteq$) Let $s \in F_w(x)$. We show that $s_{[k]} \in
F_{w_{[k]}}(x_{[k]})$ and $s_{[k+1:p]} \in
F_{w_{[k+1:p]}}(x_{[k+1:p]})$. Let $e = \frac{x_k - x_{k+1}}{2} > 0$
and $h \in \R^p$ with $\|h\|_\infty < e$. Since the $k$ largest
components of $x + h$ are $\{x_j + h_j\}_{j \in [k]}$, we have
$$
\|x+h\|_w = \|(x+h)_{[k]}\|_{w_{[k]}} + \|(x+h)_{[k+1:p]}\|_{w_{[k+1:p]}}.
$$

Now, take $h \in \R^p$ such that $\|h\|_\infty < e$ and $h_{k+1} =
\dots = h_p = 0$. Using the above identity and the definition of
$F_w(x)$, one may deduce that
\begin{align*}
\|(x+h)_{[k]}\|_{w_{[k]}} & =  \|x+h\|_w- \|x_{[k+1:p]}\|_{w_{[k+1:p]}} \\
& \geq \|x\|_w + s'h - \|x_{[k+1:p]}\|_{w_{[k+1:p]}} = 
\|x_{[k]}\|_{w_{[k]}} + \sum_{j=1}^k s_j h_j. 
\end{align*}
We therefore obtain that
$$
\|x_{[k]} + h\|_{w_{[k]}} \ge \|x_{[k]}\|_{w_{[k]}} + s_{[k]}'h
$$
for all $h \in \R^k$ satisfying $\|h\|_\infty < e$. By
(\ref{eq:subdiff-local}), we conclude $s_{[k]} \in
F_{w_{[k]}}(x_{[k]})$. To show that $s_{[k+1:p]} \in
F_{w_{[k+1:p]}}(x_{[k+1:p]})$, one can proceed in a similar manner.

%\medskip

($\supseteq$) For $s\in F_{w_{[k]}}(x_{[k]}) \times
F_{w_{[k+1:p]}}(x_{[k+1:p]})$, we clearly have
$$
s'x = \sum_{i=1}^k s_i x_i + \sum_{i=k+1}^p s_i x_i =
\|x_{[k]}\|_{w_{[k]}} + \|x_{[k+1:p]}\|_{w_{[k+1:p]}} = \|x\|_w,
$$
so that $s \in F_w(x)$ follows.

%\medskip

\emph{4)} For $x \in \R^p$ with $x_1 = \dots = x_{k_1} > \dots  >
x_{k_l + 1} = \dots = x_p$, $\patt(x)$ is clearly given by
$$
\begin{cases}
\patt(x)_1 = \dots = \patt(x)_{k_1} = l+1 > \dots > \patt(x)_{k_l+1} = \dots = \patt(x)_p = 1
& \text{if } x_p > 0 \\
\patt(x)_1 = \dots = \patt(x))_{k_1} = l > \dots > \patt(x)_{k_l+1} = \dots = \patt(x)_p = 0
& \text{if } x_p = 0.
\end{cases}
$$
According to 1), 2) and 3), it is clear that
$$
F_w(x) = F_w(\patt(x)) = 
\begin{cases}
P_{w_{[k_1]}} \times \dots \times P_{w_{[k_{l-1}+1:k_l]}} \times P_{w_{[k_l+1:p]}} 
& { if} x_p > 0 \\
P_{w_{[k_1]}} \times \dots \times P_{w_{[k_{l-1}+1:k_l]}} \times P^\pm_{w_{[k_l+1:p]}}
& { if} x_p = 0.
\end{cases}
$$
Since the codimension of a permutahedron is equal to $1$
\citep[see][]{MaesKappen92,Simion97}, the one of signed permutahedron is
equal to $0$, and since the (co-)dimensions of the individual (sign)
permutahedra can simply be added up, we have
$\codim\left(F_w(x)\right) = \|\patt(x)\|_\infty$.
\end{proof}

Proposition~\ref{prop:subdiff-SLOPE} lays the groundwork by
essentially proving Theorem~\ref{thm:SLOPE-pattern} for all SLOPE patterns
with non-negative and non-decreasing components. We denote this set of
patterns by $\mM_p^{\geq,+}$, given by
$$
\mM_p^{\geq,+} = \{m \in \mM_p : m_1 \geq \dots \geq m_p \geq 0\}.
$$

In order to extend this proposition to all SLOPE patterns in $\mM_p$, we
introduce the following group of linear transformations.
\begin{definition}
Let $\sigma \in \{-1,1\}^p$, let $\pi \in \mS_p$. We define the map
$$
\phi_{\sigma,\pi}: x \in \R^p \mapsto (\sigma_1 x_{\pi(1)},\dots,\sigma_p x_{\pi(p)})'
$$
and denote by $\mG = \{\phi_{\sigma,\pi}: \sigma \in \{-1,1\}^p, \pi \in
\mS_p\}$.
\end{definition}

The set $\mG$ is a finite sub-group of the group of orthogonal
transformations on $\R^p$. We list a number of straight-forward
properties of $\mG$ in the following lemma.

\begin{lemma} \label{lem:trans-prop}
Let $x,v \in \R^p$, $\phi \in \mG$, and let $\sigma \in \{-1,1\}^p$ and
$\pi \in \mS_p$. Then the following holds.

\begin{enumerate}

\item $x'v = \phi(x)'\phi(v)$

\item $\|x\|_w = \|\phi(x)\|_w$

\item $\|x\|_\infty = \|\phi(x)\|_\infty$

\item $\phi(\mM_p) = \mM_p$ and $\phi(P_w^\pm) = P_w^\pm$

\item $\patt(\phi(x)) = \phi(\patt(x))$

\item $\phi_{\sigma,\pi}^{-1} = \phi_{\sigma,\pi^{-1}} \in \mG$ 

\item If, for $m \in \mM_p$, $|m_{\pi(1)}| \geq \dots \geq
|m_{\pi(p)}|$ and $\sigma_j m_{\pi(j)} = |m_{\pi(j)}|$ for all $j \in
[p]$, then $\phi_{\sigma,\pi}(m)\in \mM_p^{\geq+}$.

\end{enumerate}

\end{lemma}

\begin{lemma} \label{lem:subdiff-trans} 
Let $\phi \in \mG$ and $x \in \R^p$. We then have
$$
\phi^{-1}\left(F_w(\phi(x))\right) = F_w(x) \; \text{ and } \; 
F_w(\phi(x)) = \phi\left(F_w(x)\right).
$$
\end{lemma}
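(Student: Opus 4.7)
The plan is to reduce the lemma to a single equivariance identity and then derive it directly from the invariance properties collected in Lemma~\ref{lem:trans-prop}. First, I would observe that the two displayed identities are actually equivalent: since $\phi$ is a bijection of $\R^p$ (in fact an orthogonal transformation), applying $\phi$ to both sides of $\phi^{-1}(F_w(\phi(x))) = F_w(x)$ yields $F_w(\phi(x)) = \phi(F_w(x))$, and conversely. It therefore suffices to establish $\phi(F_w(x)) = F_w(\phi(x))$.

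Next, I would invoke Proposition~\ref{prop:subdiff-norm} to write the subdifferential explicitly as
$$
F_w(x) = \{s \in \R^p : \|s\|_w^* \leq 1,\ s'x = \|x\|_w\}
= \{s \in P_w^\pm : s'x = \|x\|_w\},
$$
where the equality of the two descriptions uses Proposition~\ref{prop:subdiff-SLOPE}(1) identifying the dual unit ball of $\|.\|_w$ with the sign permutahedron $P_w^\pm$. Now I would show the set-theoretic inclusion $\phi(F_w(x)) \subseteq F_w(\phi(x))$ by taking $s \in F_w(x)$ and checking the two membership conditions for $\phi(s)$: membership in $P_w^\pm$ follows from property 4 of Lemma~\ref{lem:trans-prop} (which gives $\phi(P_w^\pm) = P_w^\pm$), and the equality $\phi(s)'\phi(x) = \|\phi(x)\|_w$ follows from properties 1 and 2 (giving $\phi(s)'\phi(x) = s'x = \|x\|_w = \|\phi(x)\|_w$).

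For the reverse inclusion, I would use property 6 of Lemma~\ref{lem:trans-prop}: since $\phi^{-1} \in \mG$ satisfies the same invariance properties, the same argument applied to $\phi^{-1}$ and the point $\phi(x)$ yields $\phi^{-1}(F_w(\phi(x))) \subseteq F_w(x)$, which rearranges to $F_w(\phi(x)) \subseteq \phi(F_w(x))$. Combining the two inclusions gives the desired equality, and then the first identity follows by applying $\phi^{-1}$. There is no real obstacle here — the lemma is essentially a bookkeeping verification, with the only subtle point being the translation between the dual-norm condition $\|s\|_w^* \leq 1$ and the polytope membership $s \in P_w^\pm$, which is already handled by Proposition~\ref{prop:subdiff-SLOPE}(1).
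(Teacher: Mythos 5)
Your proof is correct and follows essentially the same route as the paper: both rest on the characterization $F_w(x) = \{s \in P_w^\pm : s'x = \|x\|_w\}$ from Proposition~\ref{prop:subdiff-norm} together with the invariance properties of Lemma~\ref{lem:trans-prop}, the only difference being that the paper writes the argument as a single chain of equivalences while you split it into two inclusions. The extra care you take in identifying the dual unit ball with $P_w^\pm$ via Proposition~\ref{prop:subdiff-SLOPE} is a point the paper leaves implicit, but it does not change the substance of the argument.
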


\begin{proof} 
The two statements are equivalent, we show the second one. Let $s \in
P_w^\pm$. Then
\begin{align*}
s \in F_w(\phi(x)) & \iff s'\phi(x) = \|\phi(x)\|_w \iff \phi^{-1}(s)'x = \|x\|_w \\
& \iff \phi^{-1}(s) \in F_w(x) \iff s \in \phi(F_w(x))
\end{align*}
by Proposition~\ref{prop:subdiff-norm} and Lemma~\ref{lem:trans-prop}.
\end{proof}

We are now equipped to prove Theorem~\ref{thm:SLOPE-pattern}.

\begin{proof}[Proof of Theorem~\ref{thm:SLOPE-pattern}] \leavevmode

We start by proving \emph{1)} and \emph{2)} before showing that the
map is a bijection.

\emph{1)} Let $m \in \mM_p$ and let $\phi \in \mG$ such that $\phi(m)
\in \mM_p^{\geq,+}$. According to Lemma~\ref{lem:subdiff-trans}, and
because $\phi$ is an isomorphism on $\R^p$, we have
$$
\codim(F_w(m)) = \codim\left(\phi^{-1}\left(F_w(\phi(m))\right)\right)
= \codim\left(F_w(\phi(m))\right) = \|\phi(m)\|_\infty = \|m\|_\infty.
$$

\emph{2)} Let $x \in \R^p$ and let $\phi \in \mG$ such that $\phi(x)_1
\geq \dots \geq \phi(x)_p \geq 0$. According to
Lemma~\ref{lem:subdiff-trans} and
Proposition~\ref{prop:subdiff-SLOPE}, the following equalities hold
$$
F_w(x) = \phi^{-1}\left(F_w(\phi(x))\right) 
= \phi^{-1}\left(F_w\left(\patt(\phi(x))\right)\right) 
= \phi^{-1}\left(F_w\left(\phi\left(\patt(x)\right)\right)\right)
= F_w(\patt(x)).
$$

We now show that the mapping under consideration is indeed a bijection
between $\mM_p$ and $\mF_0$.

%\medskip

(surjection) According to Proposition~\ref{prop:subdiff-face}, a
non-empty face of $P_w^\pm$ can be expressed as $F_w(x)$ for some $x
\in \R^p$. According to \emph{2)} above, we have $F_w(x) =
F_w(\patt(x))$ for $\patt(x) \in \mM_p$.

%\medskip

(injection) Note that Proposition~\ref{prop:subdiff-SLOPE} shows that
the mapping is injective on $\mM_p^{\geq,+}$. To prove that it remains
injective on all of $\mM_p$, we show that $|\mM_p| \leq |\mF_0|$. For
this, we need several definitions. For $m \in \mM_p$, let
$\stab_\mG(m) = \{\phi \in \mG: \phi(m) = m\}$ and $\orb_\mG(m) =
\{\phi(m) : \phi \in \mG\}$, the stabilizer and orbit of $m$,
respectively, with respect to $\mG$. For $m\in \mM_p$, there exists
$\phi \in \mG$ such that $\phi(m) \in \mM_p^{\geq,+}$. Therefore, the
orbit-stabilizer formula \citep{Artin11}[Proposition~6.8.4] gives
$$
\mM_p \; = \! \!\! \bigcup_{m\in \mM_p^{\ge,+}} \!\! \orb_\mG(m) \implies |\mM_p| \leq
\sum_{m \in \mM_p^{\geq,+}} |\orb_\mG(m)| = \sum_{m \in \mM_p^{\geq,+}}
\frac{|\mG|}{|\stab_\mG(m)|}.
$$
We also look at stabilizer and orbit when $\mG$ operates on $\mF_0$.
For a face $F \in \mF_0$, let $\stab_\mG(F) = \{\phi \in \mG : \phi(F)
= F\}$ and $\orb_\mG(F) = \{\phi(F) : \phi \in \mG\}$. We first show
that if $\orb_\mG(F_w(m)) \cap \orb_\mG(F_w(\tilde m)) \neq \emptyset$
for some $m, \tilde m \in \mM_p^{\geq,+}$, $m = \tilde m$ follows. Let
us assume that $F_w(\tilde m) = \phi(F_w(m))$ for some $\phi \in \mG$.
Note that $\phi(F_w(m)) = F_w(\phi(m))$ by
Lemma~\ref{lem:subdiff-trans}. Since $w \in F_w(m)$ and $w \in
F_w(\tilde m) = F_w(\phi(m))$, we have
$$
w'm = \|m\|_w =  \|\phi(m)\|_w = w'\phi(m),
$$ 
where the first equality holds since $m \in \mM_p^{\geq,+}$, the
second equality holds by Lemma~\ref{lem:trans-prop} and the last
equality holds since $m\in F_w(\phi(m))$.  Now, if $\phi(m) \neq m$,
$w'\phi(m)<\|m\|_w$ follows since the components of $w$ are positive
and strictly decreasing. But that would contradict the above, so
$\phi(m) = m$ must hold. Consequently, $F_w(\tilde m) = F_w(m)$, which
in turn implies $\tilde m = m$ by
Proposition~\ref{prop:subdiff-SLOPE}.

Now, let $m \in \mM^{\geq,+}_p$ and let us show that $\stab_\mG(m) =
\stab_\mG(F_w(m))$. The inclusion $\stab_\mG(m) \subseteq
\stab_\mG(F_w(m))$ immediately follows from
\begin{align*}
\phi \in \stab_\mG(m) & \implies F_w(m) = \phi^{-1}(F_w(\phi(m))) =
\phi^{-1}(F_w(m)) \implies \phi(F_w(m)) = F_w(m) \\
& \implies \phi \in \stab_\mG(F_w(m)).
\end{align*}
To show $\stab_\mG(F_w(m)) \subseteq \stab_\mG(m)$, let $\phi \in
\stab_\mG(F_w(m))$ and note that $F_w(m) = \phi(F_w(m)) =
F_w(\phi(m))$. Since $m \in \mM^{\geq,+}_p$, this implies that $w \in
F_w(m) = F_w(\phi(m))$, so that the same reasoning as above yields $m
= \phi(m)$ and $\phi \in \stab_\mG(m)$.

To conclude, note that since the orbits $\orb_\mG(F_w(m))$ with $m\in
\mM_p^{\geq,+}$ are disjoint, and since $\stab_\mG(m)
=\stab_\mG(F_w(m))$, we may deduce that
$$
|\mM_p| \leq \sum_{m \in \mM_p^{\geq,+}} \frac{|\mG|}{|\stab_\mG(F_w(m))|} 
= \sum_{m \in \mM_p^{\geq,+}} \big|\orb_\mG(F_w(m))\big| 
= \Big|\!\!\!\!\bigcup_{m \in \mM_p^{\geq,+}} \!\! \orb_\mG\left(F_w(m)\right)\Big| 
\leq |\mF_0|.
$$
\end{proof}

%%%%%%%%%%%%%%%%%%%%%%%%%%%%%%%%%%%%%%%%%%%%%%%%%%%%%%%%%%%%%%%%%%%%%%%%%%%%
\subsection{Proof of Theorem~\ref{thm:SLOPE-acc}} 
\label{app:proof-SLOPE-acc}
%%%%%%%%%%%%%%%%%%%%%%%%%%%%%%%%%%%%%%%%%%%%%%%%%%%%%%%%%%%%%%%%%%%%%%%%%%%%

The following lemma generalizes Proposition~4.1 from \cite{Gilbert17}
that is stated for the $\ell_1$-norm to an arbitrary norm. This lemma
is used in the proof of both Theorem~\ref{thm:SLOPE-acc} and
Theorem~\ref{thm:LASSO_BP-acc}.

\begin{lemma} \label{lem:analytic}
Let $s \in \R^p$ and $\|.\|$ be a norm on $\R^p$. The vector space
$\row(X)$ intersects $\dnorm(s)$ if and only if the following holds.
\begin{equation} \label{eq:analytic}
Xb = Xs \implies \|b\| \geq \|s\|
\end{equation}
\end{lemma}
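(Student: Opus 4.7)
The plan is to reduce the statement to a standard duality between the norm minimization problem $\min\{\|b\|:Xb=Xs\}$ and its Lagrangian dual. The starting observation, obtained from Proposition~\ref{prop:subdiff-norm}, is that $\dnorm(s) = \{v\in\R^p: \|v\|^*\leq 1,\, v's=\|s\|\}$, so $\row(X)\cap\dnorm(s)\neq\emptyset$ is equivalent to the existence of $z\in\R^n$ with $\|X'z\|^*\leq 1$ and $z'Xs=\|s\|$. This reformulation immediately frames the lemma as an assertion about the attainment of a dual pairing.

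For the forward direction, assume such a $z$ exists. Then for any $b$ with $Xb=Xs$, the dual norm inequality gives $\|s\| = z'Xs = z'Xb = (X'z)'b \leq \|X'z\|^*\|b\| \leq \|b\|$, which is exactly $(\ref{eq:analytic})$. This direction requires no convexity machinery beyond the definition of the dual norm.

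For the converse, the hypothesis says precisely that $s$ is a minimizer of the convex program $\min_b\|b\|$ subject to $Xb=Xs$. I would then invoke Lagrangian duality: a short computation shows that $\inf_b[\|b\|-(X'z)'b]$ equals $0$ when $\|X'z\|^*\leq 1$ (attained at $b=0$) and $-\infty$ otherwise, so the dual problem reads $\max_z z'Xs$ subject to $\|X'z\|^*\leq 1$. The primal is convex, feasible (with $b=s$), and the constraint is affine, so strong duality holds and the two optimal values coincide at $\|s\|$. Any dual optimizer $z^\ast$ then satisfies $\|X'z^\ast\|^*\leq 1$ and $z^{\ast\prime} Xs=\|s\|$, so $X'z^\ast\in\dnorm(s)\cap\row(X)$. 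An equivalent route would be to invoke the first-order optimality condition $0\in\dnorm(s)+N_C(s)$ at $s$, with $C=\{b:Xb=Xs\}$, and use $N_C(s)=\ker(X)^\perp=\row(X)$.

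The only nontrivial step is the converse, and within it the only real care required is justifying attainment in the dual (or equivalently the existence of a Lagrange multiplier). This is routine here because the primal is a finite-valued convex program with an affine constraint, so classical results apply without qualification; no constraint qualification issues arise because the constraint is linear.
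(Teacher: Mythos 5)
Your proposal is correct. Your main line of argument differs from the paper's in a worthwhile way: the paper runs both directions through a single subdifferential computation, introducing the indicator function $f_s$ of the affine set $\{b : Xb = Xs\}$, noting $\partial_{f_s}(b) = \row(X)$ on that set, and reading off the equivalence from the optimality condition $0 \in \row(X) + \dnorm(s)$ --- which is precisely the ``equivalent route'' you sketch in one sentence at the end via $N_C(s) = \ker(X)^\perp = \row(X)$. Your primary argument instead splits the two directions: the forward implication is done completely elementarily from the dual-norm pairing inequality $(X'z)'b \leq \|X'z\|^*\|b\|$ together with the characterization $\dnorm(s) = \{v : \|v\|^* \leq 1,\ v's = \|s\|\}$ from Proposition~\ref{prop:subdiff-norm}, requiring no convex-analytic machinery at all; the converse invokes Lagrangian strong duality with dual attainment for a finite-valued convex program under affine constraints. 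What your route buys is transparency in the easy direction and an explicit exhibition of the dual certificate $z^*$; what it costs is having to justify dual attainment (a Kuhn--Tucker vector exists because the constraints are affine and the optimal value is finite --- you correctly flag this as the one nontrivial point), whereas the paper's route leans instead on the subdifferential sum rule, which needs its own (equally routine) qualification since the norm is finite and continuous everywhere. Both are complete; yours is arguably more self-contained in the forward direction.
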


\begin{proof}
Consider the function $f_s : \R^p \to \{0,\infty\}$ given by
$$
f_s(b) =
\begin{cases} 0 & Xb = Xs \\
\infty & \text{else}. 
\end{cases}
$$
Then (\ref{eq:analytic}) holds for $b$ if and only if $s$ is a
minimizer of the function $b \mapsto \|b\| + f_s(b)$. Since we have
$\partial_{f_s}(b) = \row(X)$ whenever $Xb = Xs$, we can deduce that
the implication (\ref{eq:analytic}) occurs if and only if
$$
0 \in \row(X) + \dnorm(s) \iff \row(X) \cap \dnorm(s) \neq \emptyset.
$$
\end{proof}

%%%%%%%%%%%%%%%%%%%%%%%%%%%%%%%%%%%%%%%%%%%%%%%%%%%%%%%%%%%%%%%%%%%%%%%%%%%%

\begin{proof}[Proof of Theorem~\ref{thm:SLOPE-acc}] 
($\implies$) If $m$ is an accessible SLOPE pattern, then
$$
\exists y \in \R^n, \exists \hat\beta \in \Sslope(y) \text{ such that
} \patt(\hat\beta) = m.
$$
By Theorem~\ref{thm:SLOPE-pattern}, we may deduce that
$\dnormx{w}(\hat\beta) = F_w(\hat\beta) = F_w(m)$. Consequently,
$$
0 \in X'(X\hat\beta - y) + \dnormx{w}(\hat\beta)  \implies X'(y - X\hat\beta) \in F_w(m).
$$ 
Therefore, $\row(X)$ intersects $F_w(m)$ (geometric characterization),
or, equivalently, by Lemma~\ref{lem:analytic}, whenever $Xb = Xm$ we
have $\|b\|_w \geq \|m\|_w$ (analytic characterization).

%\medskip

($\impliedby$) If $\row(X)$ intersects the face $F_w(m)$ (geometric
characterization), or, equivalently, whenever $Xb = Xm$ we have
$\|b\|_w \geq \|m\|_w$ (analytic characterization), there exists $z
\in \R^n$ such that $X'z = f \in F_w(m)$. We set $y = z + Xm$ and show
that $m \in \Sslope(y)$. We have
$$
\|X'(y - Xm)\|_w^* = \|f\|_w^* \leq 1 \text{ and } m'X'(y-Xm) = m'f = \|m\|_w,
$$
which, by Proposition~\ref{prop:ch-pen}, yields $m \in \Sslope(y)$.
\end{proof}

%%%%%%%%%%%%%%%%%%%%%%%%%%%%%%%%%%%%%%%%%%%%%%%%%%%%%%%%%%%%%%%%%%%%%%%%%%%%
\subsection{Proof of Proposition~\ref{prop:SLOPE-As}} 
\label{app:proof-SLOPE-As}
%%%%%%%%%%%%%%%%%%%%%%%%%%%%%%%%%%%%%%%%%%%%%%%%%%%%%%%%%%%%%%%%%%%%%%%%%%%%

\begin{proof}
By Theorem~\ref{thm:SLOPE-acc}, we know that
$$
m \in \mM_p \text{ is accessible} \iff \row(X) \cap F_w(m) \neq
\emptyset \iff \exists f \in \R^n: X'f \in F_w(m) \iff f \in N_w(m),
$$
which proves the first statement. Now, let $y = f + Xb$, where $f \in
N_w(m)$ and $b \in \R^p$ such that $\patt(b) = m$. Note that
$$
\|X'(y - Xb)\|^*_w = \|X'f\|^*_w \leq 1 \text{ and } b'X'(y - Xb) = b'X'f = \|b\|^*_w, 
$$
where the first inequality holds since $X'f \in F_w(m)$, a face of
$P_w^\pm$, and the latter one by applying
Proposition~\ref{prop:subdiff-face} after noticing that $X'f \in
F_w(m) = F_w(b) = \dnormx{w}(b)$ by Theorem~\ref{thm:SLOPE-pattern}.
Proposition~\ref{prop:ch-pen} then yields $b \in \Sslope(y)$, so that
$y \in A_w(m)$.

Conversely, let $y \in A_w(m)$ and let $\hat\beta \in \Sslope(y)$ so
that $\patt(\hat\beta) = m$. Then $y - X\hat\beta \in N_w(m)$ since
by Proposition~\ref{prop:ch-pen}, we have
$$ 
X'(y - X\hat\beta) \in \dnormx{w}(\hat\beta) = F_w(m),
$$ 
where the last equality holds by Theorem~\ref{thm:SLOPE-pattern}.
\end{proof}

%%%%%%%%%%%%%%%%%%%%%%%%%%%%%%%%%%%%%%%%%%%%%%%%%%%%%%%%%%%%%%%%%%%%%%%%%%%%
\subsection{Proof of Proposition~\ref{prop:convexnullset}} 
\label{app:proof-convexnullset}
%%%%%%%%%%%%%%%%%%%%%%%%%%%%%%%%%%%%%%%%%%%%%%%%%%%%%%%%%%%%%%%%%%%%%%%%%%%%

\begin{proof}
Note that by Proposition~\ref{prop:ch-pen} we have that $\hat\beta \in
\Spen(y)$ if and only if we have
$$
\|X'(y - X\hat\beta)\|^* \leq 1 \text{ \rm and } \hat\beta'(y -
X\hat\beta) = \|\hat\beta\|.
$$
Consequently, when $\|X'u\|^* \leq 1$ it is clear that $0 \in
\Spen(u)$ implying that $\Spen(u) = \{0\}$ as all elements of
$\Spen(u)$ must have the same norm. Now, let $u \in A_\emptyset$ and
remember that $\hat u= y - X\hat\beta$. The following inequality
$$
(y-\hat u)'(u-\hat u) = \underbrace{\hat\beta'X'u}_{ \leq \|\hat\beta\|} -
\underbrace{\hat\beta'X'(y - X\hat\beta)}_{ = \|\hat \beta\|} \leq 0
$$
shows that, indeed, $\hat u$ is the projection of $y$ onto the convex
null set $A_\emptyset$.
\end{proof}

%%%%%%%%%%%%%%%%%%%%%%%%%%%%%%%%%%%%%%%%%%%%%%%%%%%%%%%%%%%%%%%%%%%%%%%%%%%%
\subsection{Proof of Theorem~\ref{thm:LASSO_BP-acc}}
\label{app:proof-LASSO_BP-acc}
%%%%%%%%%%%%%%%%%%%%%%%%%%%%%%%%%%%%%%%%%%%%%%%%%%%%%%%%%%%%%%%%%%%%%%%%%%%%

\begin{proof} ($\implies$) 
Let $\sigma$ be an accessible sign vector for LASSO. Then there exists
$y \in \R^n$ and $\hat\beta \in \SlassoL(y)$ such that
$\sign(\hat\beta) = \sigma$. According to the characterization of
LASSO minimizers in (\ref{eq:ch-lasso}), by setting $z = (y -
X\hat\beta)/\lambda$, one may deduce that $X'z \in F_1(\sigma)$. If
$\sigma$ is an accessible sign vector for BP, there exists $y \in
\col(X)$ and $\hat\beta \in \Sbp(y)$ with $\sign(\hat\beta) = \sigma$.
According to the characterization of BP minimizers in (\ref{eq:chBP}),
there exisits $z \in \R^n$ such that $X'z \in F_1(\sigma)$. Therefore,
$\row(X)$ intersects $F_1(\sigma) = \dnormx{1}(\sigma)$ (geometric
characterization), or, equivalently, by Lemma~\ref{lem:analytic},
whenever $Xb = X\sigma$, we have $\|b\|_1 \geq \|\sigma\|_1$ (analytic
characterization).

%\medskip

($\impliedby$) If $\row(X)$ intersects the face $F_1(\sigma)$
(geometric characterization) or, equivalently, if $Xb = X\sigma$
implies $\|b\|_1 \geq \|\sigma\|_1$ (analytic characterization), then
there exists $f \in F_1(\sigma)$ and $z \in \R^n$ such that $X'z = f$.
Note that $j \in \supp(\sigma)$ implies that $f_j = \sigma_j =
\sign(\sigma_j)$. Set $y = \lambda z + X\sigma$  We show that $\sigma
\in \SlassoL(y)$. We have
$$
\begin{cases}
\|X'(y - X\sigma)\|_\infty = \lambda \|X'z\|_\infty \leq \lambda,\\
X_j'(y - X\sigma) = \lambda X_j'z = \lambda f_j = \lambda \sigma_j =
\lambda\sign(\sigma_j) & \forall j \in \supp(\sigma),
\end{cases}
$$
so that according to the characterization of LASSO minimizers in
(\ref{eq:ch-lasso}), we have $\sigma \in \SlassoL(y)$, implying that
$\sigma$ is accessible for LASSO. For BP, set $y = X\sigma$ and note
that, according to the characterization of BP minimizers in
(\ref{eq:chBP}), $\sigma \in \Sbp(y)$, implying that $\sigma$ is also
accessible for BP.
\end{proof}

\begin{corollary} \label{cor:BP-acc}
Let $X \in \R^{n \times p}$, $\sigma \in \{-1,0,1\}^p$ and assume that
$\hat\beta$ is the unique solution to the BP problem $\Sbp(y)$ with $y
= X\sigma$. Let $\tilde\beta \in \R^p$ satisfy $\|\tilde\beta -
\hat\beta\|_\infty < 1/2$. We then have that
$$
\sigma \text{ is accessible } \iff \sign(\round(\tilde \beta)) = \sigma,
$$
where $\round(.)$ rounds componentwise to the nearest integer.
\end{corollary}

\begin{proof}
($\implies$) If $\sigma$ is accessible, by the analytic
characterization in Theorem~\ref{thm:LASSO_BP-acc}, $\hat\beta =
\sigma$. Since $\|\tilde\beta - \sigma\|_\infty < 1/2$, we get
$\sign(\round(\tilde\beta)) = \round(\tilde\beta) = \sigma$.

($\impliedby$) If $\sigma$ is not accessible, we have $\sign(\hat\beta)
\neq \sigma$. Using $\|\tilde\beta - \hat\beta\|_\infty < 1/2$, we can show that
$$
F_1(\sign(\hat\beta)) = \dnormx{1}(\sign(\hat\beta)) \subseteq
\dnormx{1}(\sign(\round(\tilde\beta))) =
F_1(\sign(\round(\tilde\beta))).
$$
Since $\row(X)$ intersects $F_1(\sign(\hat\beta))$ by the geometric
characterization in Theorem~\ref{thm:LASSO_BP-acc},
$\sign(\round(\tilde\beta))$ is accessible. But then
$\sign(\round(\tilde \beta)) \neq \sigma$ must hold.
\end{proof}

%%%%%%%%%%%%%%%%%%%%%%%%%%%%%%%%%%%%%%%%%%%%%%%%%%%%%%%%%%%%%%%%%%%%%%%%%%%%
%%%%%%%%%%%%%%%%%%%%%%%%%%%%%%%%%%%%%%%%%%%%%%%%%%%%%%%%%%%%%%%%%%%%%%%%%%%%
%%%%%%%%%%%%%%%%%%%%%%%%%%%%%%%%%%%%%%%%%%%%%%%%%%%%%%%%%%%%%%%%%%%%%%%%%%%%
%%%%%%%%%%%%%%%%%%%%%%%%%%%%%%%%%%%%%%%%%%%%%%%%%%%%%%%%%%%%%%%%%%%%%%%%%%%%

\end{document}